\newcommand{\ppar}[0]{\ \par}
\newcommand{\E}{\mathbf{E}}
\renewcommand{\phi}{\varphi}
\renewcommand{\epsilon}{\varepsilon}
\newcommand{\Size}[1]{\left\lvert #1 \right\rvert}
\newcommand{\Span}[1]{\left\langle\, #1 \,\right\rangle}
\newcommand{\Set}[1]{\left\{ #1 \right\}}
\newcommand{\Hc}[0]{\mathcal{H}}
\newcommand{\Kc}[0]{\mathcal{K}}
\newcommand{\norm}[0]{\trianglelefteq}
\renewcommand{\theta}[0]{\vartheta}
\renewcommand{\phi}[0]{\varphi}
\DeclareMathOperator{\Aut}{Aut}
\DeclareMathOperator{\Hol}{Hol}
\newtheorem{dummy}{Dummy}
\numberwithin{dummy}{section}
\newtheorem{theorem}[dummy]{Theorem}
\newtheorem{remark}[dummy]{Remark}
\newtheorem{lemma}[dummy]{Lemma}
\newtheorem{definition}[dummy]{Definition}
\newtheorem{proposition}[dummy]{Proposition}
\newtheorem*{question}{Question}
\newtheorem{rec}[dummy]{Lemma}
\newtheorem{ass}[dummy]{Assumption}
\newcommand{\Fact}[0]{Lemma}
\numberwithin{equation}{section}
\begin{document}

\date{7 March 2017, 10:34 CET --- Version 4.09%%%
}

\title[]%
      {The multiple holomorph of\\
      a finitely generated abelian group}
      
\author{A. Caranti}

\address[A.~Caranti]%
 {Dipartimento di Matematica\\
  Universit\`a degli Studi di Trento\\
  via Sommarive 14\\
  I-38123 Trento\\
  Italy} 

\email{andrea.caranti@unitn.it} 

\urladdr{http://science.unitn.it/$\sim$caranti/}

\author{F.~Dalla Volta}

\address[F.~Dalla Volta]{Dipartimento di Matematica e Applicazioni\\
  Edificio U5\\
  Universit\`a degli Studi di Milano--Bicocca\\
  Via R.~Cozzi, 53\\
  I-20126 Milano\\
  Italy}

\email{francesca.dallavolta@unimib.it}

\urladdr{http://www.matapp.unimib.it/$\sim$dallavolta/}

\subjclass[2010]%
                {%              
                20B35 %Permutation groups/Subgroups of symmetric groups
                20K05 %Abelian groups/Finitely generated groups
                20K30 %Abelian groups/Automorphisms, homomorphisms,
                %endomorphisms, etc.
                20D45 %
                20E36%
                }

\keywords{holomorph, multiple holomorph, finitely-generated abelian
  groups, commutative rings}

\begin{abstract}        
  W.H.~Mills   has   determined,  for   a   finitely
  generated abelian  group $G$, the  regular subgroups $N \cong  G$ of
  $S(G)$, the  group of permutations  on the  set $G$, which  have the
  same  holomorph   as  $G$,  that   is,  such  that   $N_{S(G)}(N)  =
  N_{S(G)}(\rho(G))$,   where   $\rho$    is   the   (right)   regular
  representation.

  We give an alternative approach to  Mills' result, which relies on a
  characterization of the regular  subgroups of $N_{S(G)}(\rho(G))$ in
  terms of commutative ring structures on $G$.

  We  are led to  solve, for  the case  of a finitely  generated abelian
  group $G$, the following problem:  given an abelian group $(G, +)$,
  what are the  commutative ring structures $(G, +,  \cdot)$ such that
  all automorphism of $G$ as a  group are also automorphisms of $G$ as
  a ring?
\end{abstract}

\thanks{The first author acknowledges support by Dipartimento di Matematica,
  Università degli Studi di Trento.  The  authors are members
  of INdAM---GNSAGA, Italy}

\maketitle

\thispagestyle{empty}

\bibliographystyle{elsarticle-num}

%\tableofcontents

%\input{Introduction}
\section{Introduction}

Let  $G$ be  a  group, and  $\rho  :  G \to  S(G)$  its right  regular
representation, where $S(G)$  is the group of permutations  on the set
$G$. The normalizer 
\begin{equation*}
  \Hol(G) = N_{S(G)}(\rho(G))
\end{equation*}
of  the image  of $\rho$  is the  \emph{holomorph} of  $G$, and  it is
isomorphic to the  natural extension of $G$ by  its automorphism group
$\Aut(G)$.  It is well-known that $\Hol(G) = N_{S(G)}(\lambda(G))$,
where $\lambda : G \to S(G)$ is the left regular representation, since
$[\rho(G), \lambda(G)] = 1$.

The \emph{multiple  holomorph} of $G$ has  been defined in
G.A.~Miller   \cite{Miller-multi}  as  
\begin{equation*}
  N_{S(G)}(\Hol(G))  =
  N_{S(G)}(N_{S(G)}(\rho(G))).
\end{equation*}
Miller   has  shown  that   the  quotient group
\begin{equation*}
  T(G)
  =
  N_{S(G)}(\Hol(G)) / \Hol(G)
\end{equation*}
acts regularly  by
conjugation on  the set of the  regular subgroups $N$ of  $S(G)$ which
are isomorphic to  $G$ and have the same  holomorph as $G$, that
is, the  regular subgroups $N \cong G$ of  $S(G)$ such that
\begin{equation*}
  N_{S(G)}(N) = N_{S(G)}(\rho(G)).
\end{equation*}

There has been some attention in the recent literature
\cite{Kohl-multi} to the problem of determining, for $G$ in a given
class of groups, the set
\begin{equation*}
  \Hc(G)
  =
  \Set{ N \le S(G) : \text{$N$ is regular, $N \cong G$ and $N_{S(G)}(N)
    = \Hol(G)$} }
\end{equation*}
and the group $T(G)$.

In 1951 W.H.~Mills~\cite{Mills-multi} determined these data for
a finitely-generated abelian group $G$, extending the results
of~\cite{Miller-multi} for finite abelian  groups. Miller enumerated
the regular groups $N$ such that $N \cong G$ and $N_{S(G)}(N)
= \Hol(G)$. Mills noted that for $N$ abelian and regular, the
condition $N_{S(G)}(N) 
= \Hol(G)$ implies $N \cong G$. Later Mills showed in~\cite{Mills-non}
that if $G$ abelian, $N$ is a regular subgroup of $S(G)$, and $N_{S(G)}(N)
= \Hol(G)$, then $N$ is abelian as well.

In this paper, we redo
Mills' work using the approach of~\cite{affine}, which allows us to
translate the problem in terms of commutative rings. In particular, we
are led to solve the  following question, which
might be of independent interest, for the case when $G$ is a
finitely-generated abelian group.
\begin{question}
  Let $(G, +)$ be an abelian group. 

  What are the  commutative ring structures $(G, +,  \cdot)$ such that
  the automorphisms of $G$ as a group are also automorphisms of $G$ as
  a ring?
\end{question}

Theorem~\ref{thm:TG} states that if $G$
is a  finitely-generated abelian group,  then $T(G)$ is  an elementary
abelian $2$-group, of  order $1$, $2$, or $4$. In  other words, for a
given $G$,  there are either $1$,  $2$, or $4$  regular
subgroups $N$  of $S(G)$  that are  isomorphic to  $G$, and  such that
$N_{S(G)}(N) = N_{S(G)}(\rho(G))$.

These regular  subgroups are  described, via  the just  mentioned ring
connection, in Theorem~\ref{thm:main}.
In~\ref{thm:submain} we  also determine explicitly all
the group structures $G$ for which $\Size{T(G)} > 1$.

The     plan    of     the    paper     is    the     following.    In
Section~\ref{sec:same-holomorph} we define the various holomorphs, and
set  up  the problem.  In  Section~\ref{sec:regular}  we rephrase  the
problem in terms  of rings. The classification of the  rings is worked
out  in  Section~\ref{sec:class}. The  group  $T(G)$  is discussed  in
Section~\ref{sec:group}.

\section{Groups with the same holomorph}
\label{sec:same-holomorph}

In this section, $G$ is an additively written group.

The \emph{holomorph} of a group $G$ is the natural semidirect product 
\begin{equation*}
  \Aut(G) \, G
\end{equation*}
of $G$ by its automorphism group $\Aut(G)$. Let $S(G)$ be the group of
permutations on the set $G$. Consider the (right) regular representation
\begin{align*}
  \rho :\ &G \to S(G)
        \\&g \mapsto (x \mapsto x + g).
\end{align*}
The following is well-known.
\begin{proposition}\label{prop:right-and-left}
  $N_{S(G)}(\rho(G)) =  \Aut(G) \, \rho(G)$ is isomorphic  to the holomorph
  $\Aut(G) \, G$ of $G$.
\end{proposition}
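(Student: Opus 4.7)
The plan is to verify the two chains of containments $\rho(G), \Aut(G) \leq N_{S(G)}(\rho(G))$ and then show that every element of the normalizer is a product of the two kinds, after which the semidirect product structure will fall out automatically.

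First I would observe that $\Aut(G)$ sits inside $S(G)$ in the obvious way (each automorphism is a permutation of the underlying set $G$), and compute the conjugation action: for $\alpha \in \Aut(G)$, $g \in G$ and $x \in G$,
\begin{equation*}
(\alpha \rho(g) \alpha^{-1})(x) = \alpha(\alpha^{-1}(x) + g) = x + \alpha(g) = \rho(\alpha(g))(x).
\end{equation*}
This shows that $\alpha \rho(g) \alpha^{-1} = \rho(\alpha(g))$, so $\Aut(G)$ normalizes $\rho(G)$, and $\Aut(G)\,\rho(G) \leq N_{S(G)}(\rho(G))$. Next, since every $\alpha \in \Aut(G)$ fixes the identity $0$, while $\rho(g)$ fixes $0$ only when $g = 0$, we have $\Aut(G) \cap \rho(G) = \{1\}$.

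For the reverse inclusion, I would take $\sigma \in N_{S(G)}(\rho(G))$ and argue that, after multiplying on the right by $\rho(-\sigma(0))$, we may assume $\sigma(0) = 0$. Then conjugation by $\sigma$ induces an automorphism of $\rho(G) \cong G$, so there exists $\alpha \in \Aut(G)$ with $\sigma \rho(g) \sigma^{-1} = \rho(\alpha(g))$ for all $g$. Evaluating both sides at $0$ gives $\sigma(g) = \alpha(g)$, so $\sigma = \alpha \in \Aut(G)$. Thus the original $\sigma$ lies in $\Aut(G)\,\rho(G)$, yielding equality $N_{S(G)}(\rho(G)) = \Aut(G)\,\rho(G)$.

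Finally I would package the result: $\rho(G)$ is normal in $N_{S(G)}(\rho(G))$ (by definition), $\Aut(G)$ is a complement by the intersection argument, and the conjugation formula $\alpha \rho(g)\alpha^{-1} = \rho(\alpha(g))$ shows that the action of $\Aut(G)$ on $\rho(G)$ under this semidirect decomposition corresponds, via $\rho$, to the natural action of $\Aut(G)$ on $G$. Hence the resulting semidirect product is isomorphic to $\Aut(G)\,G$, the holomorph of $G$. The only mildly delicate step is the normalization trick that reduces an arbitrary $\sigma$ in the normalizer to an automorphism; everything else is a direct verification.
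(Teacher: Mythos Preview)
Your argument is correct and is the standard one. The paper does not actually prove this proposition: it introduces it with ``The following is well-known'' and moves on. So there is nothing to compare against beyond noting that your proof supplies precisely the routine verification the authors chose to omit. One cosmetic point: the paper writes permutations on the right (e.g.\ $x^{\rho(g)} = x + g$), whereas your computations use left-action notation; if you want consistency with the surrounding text you would rewrite $\alpha \rho(g) \alpha^{-1} = \rho(\alpha(g))$ as $\rho(g)^{\alpha} = \rho(g^{\alpha})$, but the mathematics is unchanged.
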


\begin{definition}
  We write  $\Hol(G) = N_{S(G)}(\rho(G))$. We will  refer to either of
  the isomorphic groups
  $N_{S(G)}(\rho(G))$ and $\Aut(G) \, G$ as the \emph{holomorph} of $G$.
\end{definition}

One  may
inquire, what  are the regular subgroups  $N \le S(G)$ which  have the
same holomorph as $G$, that is, for which
\begin{equation}\label{eq:samehol}
  \Hol(N) \cong N_{S(G)}(N) = N_{S(G)}(\rho(G)) = \Hol(G).
\end{equation}

W.H.~Mills has noted in~\cite{Mills-multi} that if~\eqref{eq:samehol}
holds, then $G$ and $N$ need 
not be isomorphic. 

When we restrict our attention to  the regular subgroups $N$ of $S(G)$
for which $N_{S(G)}(N) = \Hol(G)$ and $N  \cong G$, we can appeal to a
result   of    G.A.~Miller~\cite{Miller-multi}.    Miller    found   a
characterization  of these  subgroups in  terms of  the \emph{multiple
  holomorph} of $G$
\begin{equation*}
  N_{S(G)}(\Hol(G)) = N_{S(G)}(N_{S(G)}(\rho(G))).
\end{equation*}
Consider the set
\begin{equation*}
  \Hc(G)
  =
  \Set{ N \le S(G) : \text{$N$ is regular, $N \cong G$ and $N_{S(G)}(N)
    = \Hol(G)$} }.
\end{equation*}
Using the well-known fact that two regular subgroups of $S(G)$ are
isomorphic if and only if they are conjugate in $S(G)$, Miller showed
that the group $N_{S(G)}(\Hol(G))$ acts transitively on $\Hc(G)$ by
conjugation. 
Clearly the 
stabilizer in $N_{S(G)}(\Hol(G))$ of any element $N \in \Hc(G)$ is $N_{S(G)}(N)
= \Hol(G)$. We obtain
\begin{theorem}
  The group
  \begin{equation*}
    T(G) = N_{S(G)}(\Hol(G)) / \Hol(G)
  \end{equation*}
  acts regularly on $\Hc(G)$ by conjugation.
\end{theorem}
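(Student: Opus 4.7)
The plan is straightforward: I would verify in turn that conjugation gives a well-defined action of $N_{S(G)}(\Hol(G))$ on $\Hc(G)$, that this action is transitive, and that the stabilizer of every point is exactly $\Hol(G)$. The quotient by the kernel of the action then acts regularly.

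For well-definedness, suppose $\sigma \in N_{S(G)}(\Hol(G))$ and $N \in \Hc(G)$. The conjugate $\sigma N \sigma^{-1}$ is again regular and isomorphic to $N \cong G$. Its normalizer satisfies
\begin{equation*}
  N_{S(G)}(\sigma N \sigma^{-1}) = \sigma N_{S(G)}(N) \sigma^{-1} = \sigma \Hol(G) \sigma^{-1} = \Hol(G),
\end{equation*}
using in the last step that $\sigma$ normalizes $\Hol(G)$. Hence $\sigma N \sigma^{-1} \in \Hc(G)$.

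For transitivity, take $N, N' \in \Hc(G)$. Since both are regular subgroups of $S(G)$ isomorphic to $G$, the well-known fact cited just above the statement provides a $\tau \in S(G)$ with $\tau N \tau^{-1} = N'$. I would then check that $\tau \in N_{S(G)}(\Hol(G))$: conjugating the equality $N_{S(G)}(N) = \Hol(G)$ by $\tau$ gives
\begin{equation*}
  \tau \Hol(G) \tau^{-1} = \tau N_{S(G)}(N) \tau^{-1} = N_{S(G)}(N') = \Hol(G),
\end{equation*}
the last equality holding because $N' \in \Hc(G)$ too.

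Finally, for the stabilizer, an element $\sigma \in N_{S(G)}(\Hol(G))$ fixes some $N \in \Hc(G)$ under conjugation if and only if $\sigma \in N_{S(G)}(N) = \Hol(G)$. Since $\Hol(G) \le N_{S(G)}(\Hol(G))$ trivially, the stabilizer is exactly $\Hol(G)$, and it is the same for every $N \in \Hc(G)$; hence $T(G) = N_{S(G)}(\Hol(G))/\Hol(G)$ acts on $\Hc(G)$ freely, and by the previous step transitively, thus regularly. There is no real obstacle here: the only non-formal ingredient is the cited conjugacy principle for regular subgroups of $S(G)$, which has already been invoked in the text.
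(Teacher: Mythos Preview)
Your proof is correct and follows exactly the approach outlined in the paper: invoke the conjugacy of isomorphic regular subgroups in $S(G)$ for transitivity, and observe that the stabilizer of any $N \in \Hc(G)$ is $N_{S(G)}(N) = \Hol(G)$. You have simply made explicit the well-definedness check and the verification that the conjugating element lies in $N_{S(G)}(\Hol(G))$, which the paper leaves implicit in its reference to Miller.
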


\section{Regular normal subgroups of the holomorph}
\label{sec:regular}

Given an abelian group $G$, we aim first at giving a description of the set
\begin{equation*}
  \Kc(G)
  =
  \Set{ N \le S(G) : \text{$N$ is regular, $N \norm \Hol(G)$} }
  \supseteq \Hc(G).
\end{equation*}

It was noted in~\cite{FCC} that the results of~\cite{affine} on affine
groups admit a straightforward extension  to the case of holomorphs of
abelian groups.  We recall this here in our context.

Let $N \le \Hol(G)$ be a regular
subgroup. Write  $\nu(g)$, with $g \in  G$, for the unique  element of
$N$ such that $0^{\nu(g)} = g$. (We write group actions as exponents.)
Then there is a  map $\gamma : G \to \Aut(G)$ such that  for $g \in G$
we can write uniquely
\begin{equation}\label{eq:unique-form}
  \nu(g) = \gamma(g) \rho(g).
\end{equation}
For $g, h \in G$ we have
\begin{equation}\label{eq:nu}
  \nu(g) \nu(h)
  =
  \gamma(g) \rho(g) \gamma(h) \rho(h)
  =
  \gamma(g) \gamma(h) \rho(g^{\gamma(h)} + h).
\end{equation}
Since $N$ is a subgroup of $S(G)$, and the
expression~\eqref{eq:unique-form} is unique, we obtain, for $g, h \in G$,
\begin{equation}\label{eq:gamma}
  \gamma(g) \gamma(h) = \gamma( g^{\gamma(h)} + h ).
\end{equation}
Note, for later usage, that~\eqref{eq:gamma} can be
rephrased, setting $k = g^{\gamma(h)}$, as
\begin{equation}\label{eq:gamma-for-dot}
  \gamma(k + h) = \gamma(k^{\gamma(h)^{-1}}) \gamma(h),
\end{equation}
fo $h, k \in G$.

To enforce $N \norm  \Hol(G)$, it is now enough to  make sure that $N$
is normalized by  $\Aut(G)$. In fact, $N$ is a  transitive subgroup of
$\Hol(G)$ acting on $G$. Since $\rho(G)$ is regular, the stabilizer of
$0$ in  $\Hol(G) = \Aut(G)  \rho(G)$ is $\Aut(G)$. Hence  $\Hol(G)$ is
the product of $\Aut(G)$ and $N$ (and then it is a semidirect product,
as $N$ is regular). 

In order for $\Aut(G)$ to normalize $N$,
we  must  have  that for  all $\beta
\in  \Aut(G)$  and  $g  \in  G$,     the  conjugate
$\nu(g)^{\beta}$ of $\nu(g)$ by $\beta$ in $S(G)$ lies in $N$.  
Since
\begin{equation*}
  \nu(g)^{\beta}
  =
  (\gamma(g) \rho(g))^{\beta}
  =
  \gamma(g)^{\beta} \rho(g)^{\beta}
  =
  \gamma(g)^{\beta} \rho(g^{\beta}),
\end{equation*}
uniqueness of~\eqref{eq:unique-form} implies that this is equivalent to
\begin{equation}\label{eq:gamma-is-autinv}
   \gamma(g^{\beta}) = \gamma(g)^{\beta}
\end{equation}
for $g \in G$ and $\beta \in \Aut(G)$. Applying this
to~\eqref{eq:gamma-for-dot}, we obtain
\begin{equation}\label{eq:gamma-is-anti}
  \gamma(k + h)
  =
  \gamma(k^{\gamma(h)^{-1}}) \gamma(h)
  =
  \gamma(k)^{\gamma(h)^{-1}} \gamma(h)
  =
  \gamma(h) \gamma(k),
\end{equation}
that is, $\gamma : G \to \Aut(G)$ is a homomorphism, as $G$ is abelian.

Note that~\eqref{eq:gamma}  follows
from~\eqref{eq:gamma-is-autinv}~and \eqref{eq:gamma-is-anti}, 
as
\begin{equation*}
  \gamma( g^{\gamma(h)} + h )
  =
  \gamma(g)^{\gamma(h)} \gamma(h) 
  =
  \gamma(g) \gamma(h).
\end{equation*}

We now state the characterization we will be exploiting in the rest of
the paper.
\begin{theorem}\label{thm:normal-regular}
  Let $G$ be an abelian group. The following data are equivalent.
  \begin{enumerate}
  \item\label{item:ars}
    An abelian regular subgroup $N \norm \Hol(G)$, that is, an
    element of $\Kc(G)$. 
  \item\label{item:gamma} 
    A homomorphism 
    \begin{equation*}
      \gamma : G \to \Aut(G)
    \end{equation*}
    such that for $g \in G$ and
    $\beta \in \Aut(G)$
    \begin{equation}\label{eq:gamma-for-normal}
      \gamma(g^{\beta}) = \gamma(g)^{\beta}.
    \end{equation}
  \item\label{item:enters-ring} 
    A commutative  rings structure $(G, +,
    \cdot)$ such that
    \begin{enumerate}
       \item\label{item:isagroup}
         the operation $g \circ h = g + h + g h$ defines a group
      structure $(G, \circ)$,
    \item\label{item:ghk}
      $g h k = 0$ for all $g, h, k \in G$, and
    \item\label{item:auto-auto}
      each automorphism of the group $(G, +)$ is also an
      automorphism of the ring $(G, +, \cdot)$.
    \end{enumerate}
  \end{enumerate}
  Moreover, under these assumptions
  \begin{enumerate}[(i)]
  \item in terms of~\eqref{item:gamma}, the operations
    of~\eqref{item:enters-ring} are given by
    \begin{equation*}
      g \cdot h = -g + g^{\gamma(h)},
      \qquad\text{and}\qquad
      g \circ h = g^{\gamma(h)} + h.
    \end{equation*}
    for $g, h \in G$.
  \item\label{item:GcircisoN}
    The function $\nu$ of~\eqref{eq:nu} defines an isomorphism $(G,
    \circ) \to N$. 
  \item\label{item:auto}
    Every automorphism  of $G$  is also  an automorphism  of $(G,
    \circ)$.
  \end{enumerate}
\end{theorem}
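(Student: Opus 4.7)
The equivalence $(1)\Leftrightarrow(2)$ is essentially contained in the discussion preceding the statement: given $N$ as in~$(1)$, the decomposition $\Hol(G) = \Aut(G)\rho(G)$ produces $\gamma$ with $\nu(g) = \gamma(g)\rho(g)$, and \eqref{eq:gamma-is-autinv} and~\eqref{eq:gamma-is-anti} give~$(2)$. To close the converse, starting from $\gamma$ as in~$(2)$ I set $N = \{\gamma(g)\rho(g) : g \in G\}$; closure under multiplication is~\eqref{eq:gamma}, regularity is immediate from $0^{\nu(g)} = g$, and normality in $\Hol(G) = \Aut(G)\rho(G)$ reduces to $\Aut(G)$ normalizing~$N$ by~\eqref{eq:gamma-for-normal} together with each $\rho(k)$ doing so --- a direct verification yielding $\nu(g)^{\rho(k)} = \nu(g + k - k^{\gamma(g)}) \in N$. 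The abelianness of $N$ amounts to $g^{\gamma(h)} + h = h^{\gamma(g)} + g$, which in the ring language of~$(3)$ is commutativity of~$\cdot$, so I track it as a separate condition throughout.

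For $(2)\Rightarrow(3)$ I define $g \cdot h = -g + g^{\gamma(h)}$. Left-additivity in $g$ is immediate since $\gamma(h) \in \Aut(G,+)$ is additive. The key observation is that applying~\eqref{eq:gamma-for-normal} with $\beta = \gamma(h)$, together with the fact that $\gamma(G)$ is abelian as the image of a homomorphism from the abelian $G$, yields
\begin{equation*}
  \gamma(g\cdot h) = \gamma(g^{\gamma(h)})\gamma(g)^{-1} = \gamma(g)^{\gamma(h)}\gamma(g)^{-1} = 1,
\end{equation*}
so $G\cdot G \subseteq \ker \gamma$. A short expansion then gives
\begin{equation*}
  g\cdot(h+k) - g\cdot h - g\cdot k = (g\cdot h)\cdot k,
\end{equation*}
so right-distributivity and the triple-product identity~(b) are equivalent; using commutativity (from abelianness of~$N$) with left-distributivity, both hold. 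Property~(a) is then the statement that $g\circ h = g + h + gh = g^{\gamma(h)} + h$ is a group law on~$G$ --- exactly $N$ transported back along~$\nu$ --- and~(c) is~\eqref{eq:gamma-for-normal} unwound, since for $\beta\in\Aut(G,+)$,
\begin{equation*}
  (g\cdot h)^\beta = -g^\beta + (g^\beta)^{\gamma(h)^\beta} = -g^\beta + (g^\beta)^{\gamma(h^\beta)} = g^\beta \cdot h^\beta.
\end{equation*}

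Conversely, from~$(3)$ I define $\gamma(h)$ on $G$ by $g^{\gamma(h)} := g + g\cdot h$; left-distributivity makes $\gamma(h)$ an additive endomorphism, and~(a) forces $g \mapsto g^{\gamma(h)} + h = g\circ h$ to be a bijection, whence so is $\gamma(h)$. That $\gamma$ is a homomorphism follows from~(b) together with left- and right-distributivity:
\begin{equation*}
  g^{\gamma(h)\gamma(k)} = g + g\cdot h + g\cdot k + (g\cdot h)\cdot k = g + g\cdot(h+k) = g^{\gamma(h+k)},
\end{equation*}
and~(c) is a direct reading of~\eqref{eq:gamma-for-normal}. The formulas in~(i) are built into the constructions; for~(ii), the identity $\nu(g)\nu(h) = \nu(g\circ h)$ makes $\nu$ a group isomorphism, bijective by regularity; and~(iii) follows from~(c) via $(g\circ h)^\beta = g^\beta + h^\beta + (g\cdot h)^\beta = g^\beta\circ h^\beta$. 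The only delicate point is carefully carrying along the commutativity of~$\cdot$, equivalently the abelianness of~$N$, as an independent piece of data beyond the bare homomorphism-with-aut-invariance~$\gamma$ in~$(2)$.
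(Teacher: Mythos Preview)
Your argument is correct and follows the paper's route: the same decomposition $\nu(g)=\gamma(g)\rho(g)$, the same product $g\cdot h=-g+g^{\gamma(h)}$, and the same style of verification. A few tactical differences are worth noting. The paper avoids your separate check that $\rho(G)$ normalises $N$ by observing (just before the theorem) that $\Hol(G)=\Aut(G)\,N$, since $N$ is transitive and $\Aut(G)$ is the stabiliser of~$0$; thus normalisation by $\Aut(G)$ alone suffices. For bijectivity of $\gamma(h)$ in the direction $(3)\Rightarrow(2)$ the paper exhibits $\gamma(-h)$ as an explicit inverse using~(b), while you instead invoke~(a). And where the paper defers the ring axioms to~\cite{affine,FCC}, you derive them directly via the neat identity $g\cdot(h+k)-g\cdot h-g\cdot k=(g\cdot h)\cdot k$. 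Your flagging of commutativity of~$\cdot$ (equivalently, abelianness of~$N$) as data not visibly forced by the bare homomorphism-plus-equivariance condition~(2) is a fair observation; the paper elides this point, handling it by appeal to the cited references where the passage between abelian regular subgroups and commutative rings is worked out.
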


Note that~\eqref{item:ghk} implies~\eqref{item:isagroup}. Note also
that~\eqref{item:GcircisoN} tells us that each ring structure as
in~\eqref{item:enters-ring} yields a distinct $N \in \Kc(G)$.

\begin{proof}
  We have already seen that~\eqref{item:ars}~and \eqref{item:gamma}  are
  equivalent.
  
  We  now recall  from~\cite{affine, FCC}  that  if $N$  is a  regular
  abelian  subgroup  of  $\Hol(G)$,  and $\gamma$  is  the  associated
  function as in~\eqref{item:gamma}, then, setting, for $g, h \in G$
  \begin{equation*}
    g \cdot h = -g + g^{\gamma(h)},
  \end{equation*}
  we obtain a ring structure $(G, +. \cdot)$ on $G$ such that
  \begin{equation*}
    g \circ h = g + h +  g h = g^{\gamma(h)} + h
  \end{equation*}
  defines a group structure $(G, \circ)$.
  
  To show that~\eqref{item:gamma} implies~\eqref{item:ghk}, we have to
  prove that for all $h, k \in G$ we have $\gamma(h k) = 1$. (This was
  already observed  in a  comment after Lemma~3  of~\cite{affine}.) In
  fact
  \begin{equation*}
    \gamma(h k) 
    = 
    \gamma(h^{\gamma(k)} - h)
    =
    \gamma(h)^{\gamma(k)} \gamma(h)^{-1}
    =
    [\gamma(k), \gamma(-h)]
    =
    1,
  \end{equation*}
  as $\gamma : G \to \Aut(G)$ is a homomorphism, and $G$ is abelian.

  To show that~\eqref{item:gamma} implies~\eqref{item:auto-auto}, let
  $h, k \in G$, and $\beta \in \Aut(G)$. We have
  \begin{multline*}
    h^{\beta} \cdot k^{\beta}
    =
    -h^{\beta} + h^{\beta \gamma(k^{\beta})}
    =
    -h^{\beta} + h^{\beta \gamma(k)^{\beta}}
    =\\=
    - h^{\beta} + h^{\gamma(k) \beta}
    =
    (-h + h^{\gamma(k)})^{\beta}
    =
    (h \cdot k)^{\beta},
  \end{multline*}
  where we have used~\eqref{eq:gamma-for-normal}.

  The bijection $\nu$ introduced above is a homomorphism
  $(G, \circ) \to N$ by~\eqref{eq:nu}~and \eqref{eq:gamma}.

  Finally, \eqref{item:auto} follows from
  \begin{equation*}
    (g \circ h)^{\beta}
    =
    (g + h + g h)^{\beta}
    =
    g^{\beta} + h^{\beta} + g^{\beta} h^{\beta}
    =
    g^{\beta} \circ h^{\beta}
  \end{equation*}
  for $g, h \in G$ and $\beta \in \Aut(G)$.

  Conversely, given a ring as in~\eqref{item:enters-ring}, the
  following calculations show that the function $\gamma : G \to S(G)$
  given by $\gamma(g) : h \mapsto h +  h g$
  satisfies the conditions of~\eqref{item:gamma}. (Here $\gamma(g) \in
  S(G)$ because $\gamma(g) \gamma(-g) : h \mapsto (h + h g) + (h + h
  g) (-g) = h + h(g -g) = h$, where we have used~\eqref{item:ghk}.)

  \begin{align*}
    (h + k)^{\gamma(g)}
    =
    h + k + (h + k) g
    =
    h + h g + k + k g
    =
    h^{\gamma(g)} + k^{\gamma(g)},
  \end{align*}
  for all $g, h, k \in G$,
  shows that $\gamma$ maps $G$ into $\Aut(G)$.

  \begin{align*}
    g^{\gamma(h) \gamma(k)}
    =
    (g + g h) + (g + g h) k
    =
    g + g(h + k)
    =
    g^{\gamma(h+k)},
  \end{align*}
  for all $g, h, k \in G$, where we have used~\eqref{item:ghk}, shows
  that $\gamma: G \to \Aut(G)$ is a homomorphism.

  \begin{align*}
    h^{\gamma(g^{\beta})}
    =
    h + h g^{\beta}
    =
    (h^{\beta^{-1}} + h^{\beta^{-1}} g)^{\beta}
    =
    h^{\beta^{-1} \gamma(g) \beta}
    =
    h^{\gamma(g)^{\beta}},
  \end{align*}
  for all $g, h \in G$, and $\beta \in \Aut(G)$, shows that $\gamma$
  satisfies~\eqref{eq:gamma-for-normal}.
\end{proof}

Suppose the finitely generated abelian group $(G, +)$ admits a ring
structure $(G, +, \cdot)$ as in
Theorem~\ref{thm:normal-regular}.\eqref{item:enters-ring}. 
Taking $\beta \in \Aut(G, +)$ to be inversion $g \mapsto  - g$, we get
that for all $g, h \in G$ one has
\begin{equation*}
  - g h = (- g) (- h) = g h,
\end{equation*}
that is, all products satisfy
\begin{equation}\label{eq:twice-a-product}
  2 \cdot g h = 0.
\end{equation}
%% It follows that is
%% there are no elements of order $2$ in $G$, then ring multiplication is
%% trivial, and $N = G$.

We have obtained
\begin{lemma}\label{lemma:threefold}
  In the commutative ring $(G, + , \cdot)$ as
  in~Theorem~\ref{thm:normal-regular}.\eqref{item:enters-ring} we have  
  \begin{enumerate}
  \item\label{item:products-are-involutions} 
    $2 \cdot g h = 0$,   for all $g, h \in G$, so that
  \item if $(G, +)$ has no elements of order $2$, ring multiplication
    is trivial, and
  \item\label{item:Frobenius}
    $(g + h)^{2} = g^{2} + h^{2}$ ,   for all $g, h \in G$.
  \end{enumerate}
\end{lemma}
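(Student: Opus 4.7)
The plan is essentially to exploit a single automorphism, namely inversion $\beta : g \mapsto -g$, which is a group automorphism of $(G,+)$ precisely because $(G,+)$ is abelian. By condition \eqref{item:auto-auto} of Theorem~\ref{thm:normal-regular}, $\beta$ must also be a ring automorphism, and this is essentially the only input we need; the three statements of the lemma then follow in quick succession.

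For \eqref{item:products-are-involutions}, I would observe that from the ring axioms we have $(-g)(-h) = gh$ for all $g,h \in G$ (this is the standard sign rule, which holds in any ring and comes straight from distributivity applied to $0 = (g+(-g))h$ and then to $gh(h+(-h))$). On the other hand, since $\beta$ is a ring automorphism and $\beta$ acts as negation, $(-g)(-h) = \beta(g)\beta(h) = \beta(gh) = -(gh)$. Comparing gives $gh = -gh$, i.e.\ $2\cdot gh = 0$. This is exactly the computation already performed in the paragraph preceding the lemma, so it can simply be invoked.

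Part~(2) is then immediate: if $(G,+)$ has no $2$-torsion, $2\cdot gh = 0$ forces $gh = 0$ for every pair $g,h$, so the multiplication is identically zero.

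For Part~\eqref{item:Frobenius}, I would expand $(g+h)^2$ by the distributive and commutative laws to obtain
\begin{equation*}
  (g+h)^2 = g^2 + gh + hg + h^2 = g^2 + 2\cdot gh + h^2,
\end{equation*}
and then apply Part~\eqref{item:products-are-involutions} to cancel the middle term, yielding $(g+h)^2 = g^2 + h^2$. There is no genuine obstacle here; the lemma is a short corollary of Theorem~\ref{thm:normal-regular}.\eqref{item:auto-auto} applied to the single automorphism $-\mathrm{id}$.
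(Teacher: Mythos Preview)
Your proposal is correct and follows exactly the paper's approach: the paper proves part~\eqref{item:products-are-involutions} in the paragraph immediately preceding the lemma by applying inversion $\beta : g \mapsto -g$ as a ring automorphism to get $-gh = (-g)(-h) = gh$, and parts~(2) and~\eqref{item:Frobenius} are then immediate consequences just as you describe. (Minor typo: in your derivation of the sign rule you wrote ``$gh(h+(-h))$'' where you presumably meant ``$(-g)(h+(-h))$'', but the intent is clear.)
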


\section{The classification}
\label{sec:class}

From now on, let $G$ be a finitely generated abelian
group. 

For such $G$, Mills~\cite{Mills-multi} has determined the set
\begin{equation*}
  \Hc(G)
  =
  \Set{ N \le S(G) : \text{$N$ is regular, $N \cong G$ and $N_{S(G)}(N)
      = \Hol(G)$} }.
\end{equation*}
In the following we will first determine the set
\begin{equation*}
  \Kc(G)
  =
  \Set{ N \le S(G) : \text{$N$ is regular, $N \norm \Hol(G)$} }
  \supseteq
  \Hc(G),
\end{equation*}
weeding  out in the process the $N$ for which $N_{S(G)}(N) >
N_{S(G)}(\rho(G))$. In 
Section~\ref{sec:group} we will show that the remaining groups are
precisely the elements of $\Hc(G)$, and 
we will also determine the group $T(G)$.

According to Theorem~\ref{thm:normal-regular}, we  proceed to find
all ring structures $(G, +, \cdot)$ such that all automorphisms of $G$
as a group  are also automorphisms of  $G$ as a ring.  We will usually
tacitly ignore the trivial case when $G^{2} = \Set{ x y : x, y \in G }
= \Set{ 0 }$.

Write
\begin{equation*}
  G = F \times H \times K,
\end{equation*}
where $F$ is  free abelian of finite rank, $H$ is a finite $2$-group,
and $K$ is a finite group of odd order.

If $a \in K$ has odd order $d$, then according
to Lemma~\ref{lemma:threefold}\eqref{item:products-are-involutions},
for all $b \in G$ we have $a b = d (a 
b) = (d a) b = 0$.  Therefore the odd part $K$ lies in the
annihilator. For the ring structure on $G$ to be
non-trivial, it has thus to be non-trivial on $F \times H$.
Because of this, from now on we will assume
\begin{equation*}
  G = F \times H,
\end{equation*}
where $F$ is  free abelian of finite rank, and  $H$, the torsion part,
is a finite $2$-group. We write
\begin{equation*}
  \Omega(H)
  =
  \Set{ t \in H : 2 t = 0 }.
\end{equation*}
By  Lemma~\ref{lemma:threefold}.\eqref{item:products-are-involutions},
all  products in  the ring  $(G, +,  \cdot)$ lie  in $\Omega(H)$.  
We
regard $\Omega(H)$ as a vector space  over the field $\E = \Set{0, 1}$
with $2$ elements.

\subsection{The case $F = 0$.}\ppar
\label{sub:torsion}

We first discuss the structure of the $2$-torsion part $H$ in the case
when the torsion-free part $F$ is zero.

Write
\begin{equation*}
  H = \prod_{i = 1}^{m} \Span{x_{i}},
\end{equation*}
where $\Size{x_{i}} = 2^{e_{i}}$, with  $e_{i} > 0$, and $\Size{x_{i}}
\ge \Size{x_{j}}$ for  $i \le j$. Write $t_{i} =  2^{e_{i} - 1} x_{i}$
for the involution in $\Span{x_{i}}$.

A \emph{homogeneous component} of $H$ will be a subgroup $\prod_{i =
  a}^{b} \Span{x_{i}}$, for some $a \le b$, such that $\Size{x_{a-1}}
> \Size{x_{a}} = \Size{x_{a+1}} = \dots = \Size{x_{b}} >
\Size{x_{b+1}}$, where the first inequality does not occur if $a = 1$,
and the last one does not occur if $b = m$.

Consider the following automorphisms of $H$.
\begin{enumerate}
\item $\xi_{i j}$, for $x_{i}, x_{j}$ in the same homogeneous component,
  exchanges $x_{i}$ with $x_{j}$, and
  leaves all the other $x_{k}$ fixed.
\item $\gamma_{i j}$, for $i < j$, maps $x_{i}$ to $x_{i} + x_{j}$,
  and leaves all the other $x_{k}$ fixed.
\item $\beta_{i j}$, for $i > j$,  maps $x_{i}$ to $x_{i} +
  2^{e_{j} - e_{i}} x_{j}$,
  and leaves all the other $x_{k}$ fixed. Note that $\beta_{i j}$
  maps $t_{i}$ to $t_{i} + t_{j}$.
\end{enumerate}

\begin{proposition}
  \label{subsub:greaterthan4}
  If $m = 1$, then we have the following possibilities.
  \begin{enumerate}
  \item $x_{1}^{2} = 0$, and then multiplication is trivial. This is
    always the case when $\Size{x_{1}} = 2$.
  \item $x_{1}^{2} = t_{1}$, and then
    \begin{enumerate}
    \item if $\Size{x_{1}} = 4$, then $N \notin \Hc(G)$;
    \item if $\Size{x_{1}} > 4$, then $N \in \Hc(G)$.
    \end{enumerate}
  \end{enumerate}
\end{proposition}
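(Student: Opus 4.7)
The plan is to use Theorem~\ref{thm:normal-regular} to reduce the problem to classifying the admissible commutative ring structures on $H = \Span{x_1}$, then, via part~\eqref{item:GcircisoN}, transfer the question ``$N \in \Hc(G)$'' to the question ``$(G, \circ) \cong G$''.

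First I would observe that, because multiplication is bi-additive and all products lie in $\Omega(H) = \Set{0, t_1}$ by Lemma~\ref{lemma:threefold}.\eqref{item:products-are-involutions}, the whole ring structure is determined by $x_1^{2} \in \Set{0, t_1}$. Automorphism-invariance (condition~\eqref{item:auto-auto}) is then automatic: every $\beta \in \Aut(H)$ is multiplication by an odd integer $c$, and $(c x_1)^{2} = c^{2} x_1^{2} = x_1^{2}$ since $2 x_1^{2} = 0$, while $\beta(t_1) = t_1$. Next I would impose condition~\eqref{item:ghk}, which by bi-additivity reduces to $x_1^{3} = 0$. When $x_1^{2} = t_1$ we compute $x_1^{3} = t_1 \cdot x_1 = 2^{e_1 - 1} x_1^{2} = 2^{e_1 - 1} t_1$, which vanishes exactly when $e_1 \ge 2$. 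This rules out $x_1^{2} = t_1$ when $\Size{x_1} = 2$, leaving only the trivial multiplication in that case and accounting for case~(1).

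The heart of the proof is the case $x_1^{2} = t_1$ with $e_1 \ge 2$, where I must decide whether $(G, \circ) \cong G$. Using $g \circ h = g + h + g h$ and the $3$-nilpotency of the ring, a straightforward induction gives
\begin{equation*}
  x_1^{\circ n} = n\, x_1 + \binom{n}{2} t_1.
\end{equation*}
When $e_1 = 2$ this yields $x_1^{\circ 2} = 2 x_1 + t_1 = 2 x_1 + 2 x_1 = 0$, so every element of $(G, \circ)$ is an involution and $(G, \circ) \cong \E^{2} \not\cong G$; hence $N \not\cong G$ and in particular $N \notin \Hc(G)$. When $e_1 \ge 3$ the coefficient $\binom{2^{e_1 - 1}}{2} = 2^{e_1 - 2}(2^{e_1 - 1} - 1)$ is even, so $x_1^{\circ 2^{e_1 - 1}} = t_1 \ne 0$, while $\binom{2^{e_1}}{2} = 2^{e_1 - 1}(2^{e_1} - 1)$ is also even, giving $x_1^{\circ 2^{e_1}} = 0$. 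Thus $x_1$ has order exactly $2^{e_1}$ in $(G, \circ)$, so $(G, \circ) \cong G$ and $N \cong G$.

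Finally, to upgrade ``$N \cong G$ and $N \norm \Hol(G)$'' to ``$N \in \Hc(G)$'', I would argue by cardinality: since $\Hol(G) \le N_{S(G)}(N)$ by normality and $\Size{N_{S(G)}(N)} = \Size{\Hol(N)} = \Size{N} \cdot \Size{\Aut(N)} = \Size{G} \cdot \Size{\Aut(G)} = \Size{\Hol(G)}$ once $N \cong G$, equality $N_{S(G)}(N) = \Hol(G)$ is forced. The only real calculation is the determination of $x_1^{\circ n}$ and the $2$-adic bookkeeping that distinguishes $e_1 = 2$ from $e_1 \ge 3$; everything else is formal once Theorem~\ref{thm:normal-regular} is in hand.
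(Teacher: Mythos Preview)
Your argument is correct and tracks the paper closely through the classification of the ring structures and the computation of the order of $x_{1}$ in $(G,\circ)$. Two points are handled by slightly different routes than in the paper.

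For $\Size{x_{1}} = 4$, you exclude $N$ from $\Hc(G)$ because $(G,\circ)\cong\E^{2}\not\cong G$; the paper instead observes that $\Aut(G,\circ)$ is strictly larger than $\Aut(G)$, whence $N_{S(G)}(N) > \Hol(G)$. Both are legitimate, since $\Hc(G)$ imposes both conditions.

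For $\Size{x_{1}} > 4$, your cardinality argument (from $N\norm\Hol(G)$ and $N\cong G$ to $N_{S(G)}(N)=\Hol(G)$) is a valid shortcut available here because $G$ is finite. The paper does not argue this way: its local proof only records that $x_{1}$ retains its order, i.e.\ that $(G,\circ)\cong G$, and the upgrade to $N\in\Hc(G)$ is deferred to Section~\ref{sec:group}, where an involutory $\theta\in S(G)$ with $\rho(G)^{\theta}=N$ is exhibited (Lemmas~\ref{lemma:conjugation}~and~\ref{lemma:iso-order-two}), yielding $N_{S(G)}(\rho(G))=N_{S(G)}(\rho(G))^{\theta^{2}}\ge N_{S(G)}(\rho(G))^{\theta}=N_{S(G)}(N)$. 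Your route is shorter for this proposition; the paper's route has the advantage of applying uniformly to all the cases of Theorem~\ref{thm:main}, including those with $F\ne 0$ where a naive cardinality comparison is unavailable, and of simultaneously supplying the coset representatives needed to identify $T(G)$.
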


\begin{proof}
  If ring multiplication is non-trivial, then $x_{1}^{2} = t_{1} \ne 0$.
  
  If $\Size{x_{1}} = 2$, we obtain $t_{1}^{2} = t_{1}$, and thus
  $t_{1}^{3} = t_{1}$, contradicting
  Theorem~\ref{thm:normal-regular}\eqref{item:ghk}.
  
  If $\Size{x_{1}} = 4$, we obtain $x_{1} \circ x_{1} = 2 x_{1} + t_{1}
  = 0$, so $(H, \circ)$ is elementary abelian of order $4$, so that
  its automorphism group is larger than that of $H$, and $N_{S(G)}(N)
  > N_{S(G)}(\rho(G))$. Therefore $N \notin \Hc(G)$.

  If $\Size{x_{1}} > 4$, then one sees immediately that $x_{1}$
  retains its order in $(H, \circ)$.
\end{proof}

In this case we have thus two rings.
\begin{equation}\label{eq:this-is-cyclic}
  \begin{cases}
    n = 0, m = 1\\
    \Size{x_{1}} > 4\\
    x_{1}^{2} \in \Set{0, t_{1}}\\
  \end{cases}
\end{equation}

We now turn  to the case $m >  2$. We will see that in  most cases the
number of ring structures depends only on the orders of $x_{1}, x_{2}$ (and
possibly   $x_{3}$),  and   their  relationships.   An  exception   is
case~\ref{eq:all-zero-but-FH},  where  the  orders of  $x_{1},  \dots,
x_{k}$ matter, for an arbitrary $k \le m$.

\begin{rec}\label{rec:x1x2}
  If $m \ge 2$, then
  \begin{equation*}\label{eq:a-product}
    x_{1} x_{2} 
    = 
    \eta_{1} t_{1} + \eta_{2} t_{2} \ne t_{2},
    \quad\text{for some $\eta_{i} \in \E$.}
  \end{equation*}
\end{rec}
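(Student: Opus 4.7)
The plan is to use Theorem~\ref{thm:normal-regular}.\eqref{item:auto-auto}: every automorphism of $(H, +)$ respects the ring multiplication, so $(x_1 x_2)^{\alpha} = x_1^{\alpha} x_2^{\alpha}$ must hold for every $\alpha \in \Aut(H)$. Since Lemma~\ref{lemma:threefold} forces $x_1 x_2 \in \Omega(H)$, I would first expand
\begin{equation*}
  x_1 x_2 = \sum_{i=1}^{m} \eta_i t_i,
  \qquad \eta_i \in \E,
\end{equation*}
and then establish separately that (a) $\eta_k = 0$ for every $k \ge 3$, and (b) the surviving $\eta_1 t_1 + \eta_2 t_2$ is not $t_2$.

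For (a), the natural tool is $\beta_{k1}$ with $k \ge 3$: it fixes $x_1$ and $x_2$, hence also $x_1 x_2$, while acting on the $\E$-basis of $\Omega(H)$ by $t_k \mapsto t_k + t_1$ and fixing the other $t_i$. Equating the two expressions for $(x_1 x_2)^{\beta_{k1}}$ produces $\eta_k t_1 = 0$, whence $\eta_k = 0$. This collapses the sum to $x_1 x_2 = \eta_1 t_1 + \eta_2 t_2$.

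For (b), I would argue by contradiction, assuming $x_1 x_2 = t_2$, and split on the relative size of $e_1$ and $e_2$. If $e_1 = e_2$, then $x_1, x_2$ lie in the same homogeneous component, the swap $\xi_{12}$ is an automorphism, and commutativity gives $(x_1 x_2)^{\xi_{12}} = x_2 x_1 = t_2$, while the value side evaluates to $t_2^{\xi_{12}} = t_1$, forcing $t_1 = t_2$, which is impossible. If $e_1 > e_2$, I would apply $\beta_{21}$: the product side becomes $x_1 (x_2 + 2^{e_1 - e_2} x_1) = x_1 x_2 + 2^{e_1 - e_2} x_1^{2}$, and the correction $2^{e_1 - e_2} x_1^{2}$ vanishes because $e_1 - e_2 \ge 1$ and $2 \cdot x_1^{2} = 0$ by Lemma~\ref{lemma:threefold}.\eqref{item:products-are-involutions}; the value side is $t_2^{\beta_{21}} = t_2 + t_1$, so comparison yields $t_1 = 0$, again absurd.

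The essential idea is matching the right automorphism to each regime so that its prescribed action on $\Omega(H)$ collides with the direct computation of the transformed product; the $2$-torsion of products, which annihilates the correction term $2^{e_1 - e_2} x_1^{2}$ as soon as $e_1 > e_2$, is exactly what makes the $\beta_{21}$-argument close. No serious obstacle is expected: the whole lemma is a bookkeeping exercise in $\Omega(H)$, driven by the explicit action of $\xi_{ij}, \beta_{ij}$ on the basis of involutions.
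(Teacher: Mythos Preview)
Your proposal is correct and follows essentially the same route as the paper: write $x_1 x_2 = \sum_k \eta_k t_k$, apply $\beta_{k1}$ for $k \ge 3$ to kill the coefficients $\eta_k$, then rule out $x_1 x_2 = t_2$ via $\xi_{12}$ when $e_1 = e_2$ and via $\beta_{21}$ (using $2^{e_1 - e_2} x_1^2 = 0$ from Lemma~\ref{lemma:threefold}) when $e_1 > e_2$. The only cosmetic difference is that you phrase the $\beta_{k1}$ step by first noting it fixes both $x_1$ and $x_2$, whereas the paper just compares the two expressions directly.
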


\begin{proof}
  We have $x_{1} x_{2} = \sum_{k=1}^{m} \eta_{k} t_{k}$ for some
  $\eta_{k} \in \E$. 
  Applying $\beta_{i1}$ to this, for $i > 2$, we see that
  \begin{equation*}
    x_{1} x_{2} = (x_{1} x_{2}) \beta_{i 1}
    =
    (\sum_{k=1}^{m} \eta_{k} t_{k}) \beta_{i 1}
    =
    (\sum_{k=1}^{m} \eta_{k} t_{k}) + \eta_{i} t_{1},
  \end{equation*}
  whence $\eta_{i} = 0$ for $i > 2$.

  It remains to show that $x_{1} x_{2} \ne t_{2}$. If $x_{1} x_{2} =
  t_{2}$, in the case when $\Size{x_{1}} = \Size{x_{2}}$ we have
  $x_{1} x_{2} = (x_{1} x_{2}) \xi_{12} = t_{1}$, a contradiction;
  when $\Size{x_{1}} > \Size{x_{2}}$, that is, $e_{1} > e_{2}$ we have
  $t_{1} + t_{2} = (t_{2}) \beta_{21} = (x_{1}
  x_{2}) \beta_{21} = x_{1} (x_{2} + 2^{e_{1} - e_{2}} x_{1}) = x_{1}
  x_{2}$, a contradiction, using the fact that
  Lemma~\ref{lemma:threefold}  implies $ 2^{e_{1} - e_{2}}
  x_{1}^{2} = 0$.
\end{proof}

\begin{rec}\label{rec:all-zero}
  Suppose  $m > 2$,  $\Size{x_{1}} > \Size{x_{3}}$, and
  Suppose  $m > 2$,  $\Size{x_{1}} > \Size{x_{3}}$, and
  either $\Size{x_{2}} >  \Size{x_{3}}$,
  or $\eta_{2} = 0$ in Fact~\ref{rec:x1x2}.
  
  Then $x_{1} x_{j} = x_{2} x_{k} = x_{k} x_{j} = 0$ for $k, j
  > 2$.
\end{rec}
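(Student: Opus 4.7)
The plan is to write each relevant product $x_i x_j$ as an $\E$-linear combination of the basis $\Set{t_1, \dots, t_m}$ of $\Omega(H)$, and then shrink its support by applying the automorphisms $\gamma, \beta$, in the style of Fact~\ref{rec:x1x2}. For $j > 2$, the same $\beta_{\ell 1}$ argument used there (with $\ell > 1$, $\ell \ne j$) immediately gives
\[
  x_1^{2} = d\, t_1, \qquad x_1 x_j = c_1 t_1 + c_j t_j,
\]
with $d, c_1, c_j \in \E$, since $\beta_{\ell 1}$ fixes $x_1$ and $x_j$ and sends $t_\ell \mapsto t_\ell + t_1$.

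Next, since $\Size{x_1} > \Size{x_3} \ge \Size{x_j}$, we have $e_1 > e_j$, and we may apply $\beta_{j 1}$: it fixes $x_1$, sends $x_j \mapsto x_j + 2^{e_1 - e_j} x_1$, and sends $t_j \mapsto t_j + t_1$. One computes
\[
  \beta_{j 1}(x_1 x_j) = x_1 x_j + 2^{e_1 - e_j} x_1^{2} = x_1 x_j,
\]
since $2 t_1 = 0$ and $e_1 - e_j \ge 1$, while $\beta_{j 1}(c_1 t_1 + c_j t_j) = (c_1 + c_j) t_1 + c_j t_j$, forcing $c_j = 0$ and hence $x_1 x_j = c_1 t_1$.

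To kill $c_1$ is the one place where the two alternatives in the hypothesis enter. Apply $\gamma_{2 j}$ to $x_1 x_2 = \eta_1 t_1 + \eta_2 t_2$: the image is $x_1 (x_2 + x_j) = x_1 x_2 + x_1 x_j = (\eta_1 + c_1) t_1 + \eta_2 t_2$. In the subcase $\Size{x_2} > \Size{x_3}$, we have $e_2 > e_j$, so $\gamma_{2 j}$ fixes both $t_1$ and $t_2$, and the same image reads $\eta_1 t_1 + \eta_2 t_2$; matching forces $c_1 = 0$. In the subcase $\eta_2 = 0$, the product $x_1 x_2 = \eta_1 t_1$ is automatically fixed by $\gamma_{2 j}$, again giving $c_1 = 0$. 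Either way, $x_1 x_j = 0$ for all $j > 2$. Finally, applying $\gamma_{1 j}$ (which fixes $t_1$ since $e_1 > e_j$, and fixes $t_2$) to $x_1 x_2$ yields $x_1 x_2 = (x_1 + x_j) x_2 = x_1 x_2 + x_j x_2$, hence $x_j x_2 = 0$; and applying $\gamma_{1 k}$ to the identity $x_1 x_j = 0$ yields $0 = (x_1 + x_k) x_j = x_k x_j$ for any $k > 2$, covering both $k \ne j$ and $k = j$ (the latter giving $x_j^{2} = 0$).

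The main obstacle is not any single calculation but the careful bookkeeping of when each chosen $\gamma$ or $\beta$ fixes each $t_i$ involved; in particular, the split in the hypothesis appears precisely because, at the step where $c_1$ must be killed, it matters whether $\gamma_{2 j}$ sends $t_2$ to $t_2$ or to $t_2 + t_j$.
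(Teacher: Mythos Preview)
Your proof is correct and uses the same underlying technique as the paper --- apply well-chosen automorphisms to a product whose value is already constrained, and compare. The paper's argument is a bit more economical, however: it never needs the $\beta_{\ell 1}$ or $\beta_{j1}$ steps to first pin down the shape of $x_{1} x_{j}$. Instead it works entirely from $x_{1} x_{2} = \eta_{1} t_{1} + \eta_{2} t_{2}$, observes once that under the hypotheses both $\gamma_{1k}$ and $\gamma_{2j}$ fix $\eta_{1} t_{1} + \eta_{2} t_{2}$, and then reads off $x_{k} x_{2} = 0$, $x_{1} x_{j} = 0$, and $x_{k} x_{j} = 0$ directly by applying $\gamma_{1k}$, $\gamma_{2j}$, and $\gamma_{1k} \gamma_{2j}$ respectively to $x_{1} x_{2}$. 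Your detour through $x_{1} x_{j} = c_{1} t_{1} + c_{j} t_{j}$ and the subsequent elimination of $c_{j}$ via $\beta_{j1}$ is valid but unnecessary: the single application of $\gamma_{2j}$ to $x_{1} x_{2}$ already yields $x_{1} x_{j} = 0$ outright.
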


\begin{proof}
  Let $k, j > 2$. 

  Note  that $\Size{x_{1}}  > \Size{x_{3}}  \ge  \Size{x_{k}}$ implies
  \begin{equation*}
    (t_{1}) \gamma_{1k}
    =
    (2^{e_{1} - 1} x_{1}) \gamma_{1k}
    =
    2^{e_{1} - 1} (x_{1} + x_{k})
    =
    2^{e_{1} - 1} x_{1}
    = 
    t_{1}.
  \end{equation*}
  Similarly, if  $\Size{x_{2}} > \Size{x_{3}}$ we  have $(t_{2}) \gamma_{2
    j} = t_{2}$. Thus under the given hypotheses we have, for $\eta_{1},
  \eta_{2} \in \E$,
  \begin{equation*}
    (\eta_{1}  t_{1} +  \eta_{2}  t_{2}) \gamma_{1i}
    =
    \eta_{1}  t_{1} +  \eta_{2}  t_{2}
    =
    (\eta_{1}  t_{1} +  \eta_{2}  t_{2}) \gamma_{2 j},
  \end{equation*}
  where the last equality depends on the fact that by assumption
  either $\Size{x_{2}} > \Size{x_{3}}$, and thus $(t_{2}) \gamma_{2
    j} = t_{2}$, or $\eta_{2} = 0$.
  
  Apply $\gamma_{1k}$ to $x_{1} x_{2}$, and use \Fact~\ref{rec:x1x2}, to get
  \begin{equation*}
    x_{1}  x_{2}  = 
    (x_{1} x_{2}) \gamma_{1k} = (x_{1}  + x_{k}) x_{2} = x_{1} x_{2} +
    x_{k} x_{2},
  \end{equation*}
  whence $x_{k} x_{2} = 0$.
  
  Apply $\gamma_{2 j}$ to $x_{1} x_{2}$ to get
  \begin{equation*}
    x_{1}  x_{2}  = 
    (x_{1} x_{2}) \gamma_{2 j} = x_{1} (x_{2}   + x_{j}) = x_{1} x_{2} +
    x_{1} x_{j},
  \end{equation*}
  whence $ x_{1} x_{j} = 0$.
  
  Finally, apply $\gamma_{1k} \gamma_{2j}$ to $x_{1} x_{2}$ to get
  \begin{equation*}
    x_{1}  x_{2}  = 
    (x_{1} x_{2}) \gamma_{1k}  \gamma_{2j}= (x_{1}  + x_{k}) (x_{2}
    + x_{j})= x_{1} x_{2} + 
    x_{k} x_{j},
  \end{equation*}
  whence
  $x_{k} x_{j} = 0$.
\end{proof}

\subsubsection{Torsion case, $m \ge 2$, $x_{1} x_{2} = t_{1}$}

\ppar

If $\Size{x_{1}} = \Size{x_{2}}$, applying $\xi_{12}$ to $x_{1} x_{2}
= t_{1}$ we get $x_{1} x_{2} = t_{2}$, a contradiction.

If $\Size{x_{1}} >  \Size{x_{2}}$, we have $2^{e_{1} - 1}  x_{2} = 0$,
so that $t_{1} = 2^{e_{1} - 1} x_{1} = 2^{e_{1} - 1} (x_{1} + x_{2}) =
2^{e_{1} - 1} (x_{1}) \gamma_{12}  = (t_{1}) \gamma_{12}$, which implies
$x_{1} x_{2} = t_{1} = t_{1} \gamma_{12} = (x_{1} x_{2}) \gamma_{12} =
(x_{1} +  x_{2}) x_{2} =  x_{1} x_{2}  + x_{2}^{2}$, so that  $x_{2}^{2} =
0$. Applying $\gamma_{2i}$ to the  last identity, we obtain $x_{i}^{2}
= 0$ for $i > 2$.

Using \Fact~\ref{rec:all-zero} we  obtain $x_{1} x_{j} =  x_{2} x_{i} =
x_{i} x_{j} = 0$ for $i, j > 2$.

If $m > 2$ and $\Size{x_{2}} = \Size{x_{3}}$, we have 
$t_{1} = (t_{1}) \xi_{23} = (x_{1} x_{2}) \xi_{23} = x_{1} x_{3}$, a
contradiction.

We have obtained the following result.
\begin{proposition}
  The following rings share the same
  group structure, and give rise to groups $(G, \circ) \cong G$.
  \begin{equation}
    \label{eq:5-two-or-four}
    \begin{cases}
      n = 0, m \ge 2\\
      \Size{x_{1}} > \Size{x_{2}}\\
      \Size{x_{1}} > 4 & \text{if $x_{1}^{2} \ne 0$}\\
      \Size{x_{2}} > \Size{x_{3}} & \text{if $m > 2$ and $x_{1} x_{2} \ne 0$}\\
      x_{1}^{2} \in \Set{ 0, t_{1} }\\
      x_{i}^{2} = 0, &\text{for $i > 1$}\\
      x_{1} x_{2} \in \Set{0, t_{1}}\\
      x_{1} x_{i} = x_{2} x_{j} = x_{i} x_{j} = 0, &\text{for $i, j >
        2$}\\
    \end{cases}
  \end{equation}
  Here there are either four rings, two rings, or just the trivial ring:
  \begin{itemize}
  \item if $\Size{x_{1}} > 4$, and either $m = 2$, or $m > 2$ and
    $\Size{x_{2}} > \Size{x_{3}}$, there are four rings;
  \item if $\Size{x_{1}} = 4$ and $m > 2$ there is just the trivial ring;
  \item there are two rings in the remaining cases.
  \end{itemize}
\end{proposition}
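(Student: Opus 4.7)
The text preceding the proposition has already deduced, starting from the hypothesis $x_1 x_2 = t_1$, all the constraints listed in~\eqref{eq:5-two-or-four}; the case $x_1 x_2 = 0$ trivially also fits them. The plan is to (i) verify that each admissible assignment of $x_1^2$ and $x_1 x_2$ from $\Set{0, t_1}$ does produce a commutative ring structure as in Theorem~\ref{thm:normal-regular}.\eqref{item:enters-ring}, (ii) establish that the associated group $(G, \circ)$ is isomorphic to $G$, and (iii) count the rings in each of the three sub-cases.

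For (i), extend the multiplication table $\Z$-bilinearly. Since the nonzero products of generators all lie in $\Omega(H)$ and $2 \Omega(H) = 0$, the extension is well defined. Commutativity is automatic, and for the condition $g h k = 0$ of Theorem~\ref{thm:normal-regular}.\eqref{item:ghk} the only potentially nonzero triple product among generators is $(x_1 x_2) x_k = t_1 x_k = 2^{e_1 - 1} x_1 x_k$, which vanishes since $e_1 \ge 2$ combined with $2 t_1 = 0$. Invariance under $\Aut(G)$ is checked on the generating automorphisms $\xi_{ij}, \gamma_{ij}, \beta_{ij}$ introduced in this section: the order inequalities $\Size{x_1} > \Size{x_2}$ and (when $m > 2$ and $x_1 x_2 \ne 0$) $\Size{x_2} > \Size{x_3}$ forbid exactly those $\xi$-automorphisms that would transport $t_1$ to $t_2$ or $t_3$, while the $\gamma$ and $\beta$ automorphisms are handled by short bilinear calculations using $2 \Omega(H) = 0$, $x_i^2 = 0$ for $i > 1$, and the vanishing of $2^{e_1 - e_j} t_1$ whenever $e_1 > e_j$.

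For (ii), by induction on $n$ the $n$-fold $\circ$-power of $x$ equals $n x + \binom{n}{2} x^2$, thanks to $g h k = 0$. For $i > 1$ this yields $\circ$-order equal to $+$-order for each $x_i$. For $x_1$ with $x_1^2 = t_1 = 2^{e_1 - 1} x_1$, the minimal positive $n$ with the $\circ$-power vanishing satisfies
\begin{equation*}
  n \bigl(1 + 2^{e_1 - 2}(n - 1)\bigr) \equiv 0 \pmod{2^{e_1}},
\end{equation*}
and the parenthesised factor is odd precisely when $e_1 \ge 3$, so the $\circ$-order of $x_1$ equals $2^{e_1}$ exactly under the hypothesis $\Size{x_1} > 4$. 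Expanding a general $\circ$-word $x_{i_1} \circ \cdots \circ x_{i_r}$ as the sum of its individual terms plus all pairwise products (higher products vanish), one checks via the same order inequalities that the cyclic $\circ$-subgroups $\Span{x_i}_\circ$ are independent, so $(G, \circ) \cong \prod_i \Z/2^{e_i} = G$.

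For (iii), the ring is determined by the two independent binary choices $x_1^2, x_1 x_2 \in \Set{0, t_1}$; the first is forced to $0$ when $\Size{x_1} = 4$, and the second is forced to $0$ when $m > 2$ and $\Size{x_2} = \Size{x_3}$. In the case $\Size{x_1} = 4$ and $m > 2$ both choices collapse (as then $\Size{x_2} = \Size{x_3} = 2$), leaving only the trivial ring; in the two intermediate cases exactly one choice remains free, leaving two rings; in the remaining case both choices are free, leaving four. The principal technical obstacle is step (ii): without the hypothesis $\Size{x_1} > 4$ the $\circ$-operation collapses the order of $x_1$ from $4$ to $2$, so $(G, \circ)$ is no longer isomorphic to $G$---this is exactly what excludes the case $\Size{x_1} = 4$ with $x_1^2 = t_1$ from $\Hc(G)$, in agreement with Proposition~\ref{subsub:greaterthan4}.
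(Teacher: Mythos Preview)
Your proof is correct and follows the same overall line as the paper: the constraints in~\eqref{eq:5-two-or-four} are extracted from the discussion preceding the proposition, the isomorphism $(G,\circ)\cong G$ is argued via preservation of the orders of the $x_i$ and independence of the cyclic $\circ$-factors, and the ring count is read off from the two binary choices $x_1^2,\,x_1x_2\in\{0,t_1\}$ together with the constraints that collapse them. The paper is much terser here: it does not explicitly re-verify that the listed multiplication tables satisfy the $\Aut(G)$-invariance of Theorem~\ref{thm:normal-regular}\eqref{item:auto-auto}, and for $(G,\circ)\cong G$ it simply invokes Remark~\ref{rem:is-iso} (``clearly the elements $x_i$ retain their orders\dots it is easy to see\dots''). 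Your explicit formula $x^{\circ n}=nx+\binom{n}{2}x^2$ makes this transparent and in particular pinpoints exactly why $\lvert x_1\rvert>4$ is needed when $x_1^2=t_1$, matching Proposition~\ref{subsub:greaterthan4}.

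One small point worth tightening in step~(i): when you reduce the $\Aut(G)$-invariance check to the automorphisms $\xi_{ij},\gamma_{ij},\beta_{ij}$, you should also observe that the remaining generators of $\Aut(H)$, namely the diagonal maps $x_i\mapsto u\,x_i$ with $u$ an odd unit, act trivially on both $H/2H$ and $\Omega(H)$ and hence automatically preserve any product factoring through $H/2H\times H/2H\to\Omega(H)$; this is the content of the observation the paper makes (in the $F\ne 0$ subsection) that only the induced action on $G/2G$ matters.
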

Note that we have required $\Size{x_{1}} > 4$ if $x_{1}^{2} = t_{1} \ne 0$, as
in Proposition~\ref{subsub:greaterthan4}, to make sure that $x_{1}$
retains its 
order in $(H, \circ)$.

\begin{remark}\label{rem:is-iso}
  Clearly   the  elements   $x_{i}$  retain   their  orders   in  $(H,
  \circ)$. Moreover it  is easy to see that $(H,  \circ)$ is still the
  direct product of the subgroups spanned by the $x_{i}$, so that $(H,
  \circ)$ is isomorphic to $H$.
\end{remark}

\subsubsection{Torsion case, $m \ge 2$, $x_{1} x_{2} = t_{1} + t_{2}$}

\ppar

If $\Size{x_{1}} > \Size{x_{2}}$, then 
$t_{1} + t_{2} = x_{1} x_{2} = (x_{1} x_{2}) \beta_{21} = (t_{1} +
t_{2}) \beta_{21} = t_{2}$, a contradiction.

Therefore $\Size{x_{1}} = \Size{x_{2}}$. Applying $\gamma_{12}$ and
$\beta_{12}$ to $x_{1} x_{2} = t_{1} + t_{2}$ we obtain $x_{1}^{2} =
t_{1}$ and $x_{2}^{2} = t_{2}$. 

When $m > 2$, if $\Size{x_{2}} = \Size{x_{3}}$, applying $\xi_{23}$ to
$x_{1} x_{2} = t_{1} + t_{2}$ we obtain $x_{1} x_{3} = t_{1} + t_{3}$.
But then  applying $\gamma_{23}$ to $x_{1}  x_{2} = t_{1} +  t_{2}$ we
obtain $t_{2} +  t_{3} = x_{1} x_{2}  + x_{1} x_{3} = t_{1}  + t_{2} +
t_{3}$, a contradiction.

Therefore  if $m  > 2$  we have  $\Size{x_{2}} >  \Size{x_{3}}$. Using
\Fact~\ref{rec:all-zero} we obtain  $x_{1} x_{j} = x_{2}  x_{i} = x_{i}
x_{j} = 0$ for $i, j > 2$. 
We have obtained the following result.
\begin{proposition}
  The following ring gives rise to a group $(G, \circ) \cong G$.
  \begin{equation}\label{eq:most-complicated}
    \begin{cases}
      n = 0, m \ge 2\\
      \Size{x_{1}} = \Size{x_{2}} > 4\\
      \Size{x_{2}} > \Size{x_{3}}\ \text{if $m > 2$}\\
      x_{1}^{2} = t_{1}, x_{2}^{2} = t_{2}\\
      x_{1} x_{2} = t_{1} + t_{2}\\
      x_{1} x_{i} = x_{2} x_{j} = x_{i} x_{j} = 0, &\text{for $i, j > 2$}\\
    \end{cases}
  \end{equation}
  The same group obviously allows also trivial ring multiplication.
\end{proposition}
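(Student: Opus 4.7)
The plan is to verify that the assignment in~\eqref{eq:most-complicated} extends to a commutative ring on $H$ satisfying the conditions of Theorem~\ref{thm:normal-regular}.\eqref{item:enters-ring}, and then to show $(H, \circ) \cong H$; the necessity of the listed relations has already been established in the discussion preceding the statement.

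First, I would extend multiplication $\Z$-bilinearly from the specified values on the generators $x_{1}, \dots, x_{m}$. Every product of generators lies in $\Omega(H)$, which is $2$-torsion, and each $x_{i}$ has order at least $4$, so the relations $2^{e_{i}} x_{i} = 0$ are automatically respected, making multiplication well-defined; commutativity and distributivity are then automatic. For condition~\eqref{item:ghk}, every product $g h$ lies in the $\E$-span of $\{t_{1}, t_{2}\}$, and $t_{i} = 2^{e_{i} - 1} x_{i}$ with $e_{i} \ge 3$. Hence $t_{i} \cdot x_{k} = 2^{e_{i} - 1} (x_{i} x_{k}) = 0$, because $2^{e_{i} - 1} \ge 4$ annihilates $\Omega(H)$; bilinearity gives $g h k = 0$ in general.

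Next I would verify that every $\beta \in \Aut(H, +)$ is a ring automorphism by testing the generating automorphisms $\xi_{i j}$, $\gamma_{i j}$, $\beta_{i j}$, and the unit-multiplications on each cyclic factor $\Span{x_{i}}$. The nontrivial cases are those acting on $\{x_{1}, x_{2}\}$: for instance $\gamma_{1 2}$ sends $x_{1}^{2}$ to $(x_{1} + x_{2})^{2} = x_{1}^{2} + 2 x_{1} x_{2} + x_{2}^{2} = t_{1} + t_{2}$, which matches $\gamma_{1 2}(t_{1}) = 2^{e_{1} - 1}(x_{1} + x_{2}) = t_{1} + t_{2}$ using $e_{1} = e_{2}$; $\xi_{1 2}$ swaps $t_{1}$ with $t_{2}$ consistently with swapping $x_{1}^{2}$ with $x_{2}^{2}$ and fixing $x_{1} x_{2}$; the remaining generators involve only the $x_{j}$ with $j \ge 3$, which are annihilated by $x_{1}$ and $x_{2}$. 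The hypothesis $\Size{x_{2}} > \Size{x_{3}}$ (when $m > 2$) is what prevents an automorphism mixing $x_{2}$ with some $x_{j}$, $j \ge 3$, and thus avoids the contradiction identified in the necessity direction.

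Finally, for $(H, \circ) \cong H$: since $g h k = 0$, one has $x^{\circ n} = n x + \binom{n}{2} x^{2}$. For $i \ge 3$ this immediately gives $|x_{i}|_{\circ} = 2^{e_{i}}$ because $x_{i}^{2} = 0$; for $i = 1, 2$ the same holds because $e_{i} \ge 3$ forces $\binom{2^{e_{i}}}{2} t_{i} = 0$. A direct computation of an arbitrary $\circ$-product $x_{1}^{\circ a_{1}} \circ \dots \circ x_{m}^{\circ a_{m}}$, again using $g h k = 0$, shows the $x_{i}$ remain independent generators, so $(H, \circ)$ is the internal $\circ$-direct product of cyclic subgroups of orders $2^{e_{1}}, \dots, 2^{e_{m}}$, hence isomorphic to $H$. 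The main obstacle is the automorphism-invariance verification: because $\Size{x_{1}} = \Size{x_{2}}$, several generating automorphisms simultaneously act nontrivially on $(x_{1}, x_{2})$, and each must be shown to preserve the intricate product $x_{1} x_{2} = t_{1} + t_{2}$ as well as both squares $x_{i}^{2} = t_{i}$, requiring careful bookkeeping of how each generator transports the involutions.
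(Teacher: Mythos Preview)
Your proposal is correct and follows essentially the same route as the paper, only far more explicitly. The paper itself is extremely terse here: it establishes the necessity of the relations in the discussion preceding the proposition, then disposes of the isomorphism $(H,\circ)\cong H$ by a one-line reference to Remark~\ref{rem:is-iso}, and justifies the constraint $\Size{x_{1}}>4$ by pointing back to Proposition~\ref{subsub:greaterthan4}; the verification that every automorphism of $(H,+)$ is a ring automorphism is left entirely implicit. Your write-up supplies exactly these missing checks (well-definedness, $ghk=0$, invariance on generating automorphisms, and the order/independence computation for $(H,\circ)$).

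Two small remarks. First, the sentence ``each $x_{i}$ has order at least~$4$'' is not literally true for $i\ge 3$, but this is harmless since every product involving such an $x_{i}$ is zero, so the bilinear extension is well-defined regardless. Second, your reduction to the generating automorphisms $\xi_{ij},\gamma_{ij},\beta_{ij}$ together with unit scalings is valid, but you could shorten the bookkeeping considerably by observing (as the paper does in the $F\ne 0$ discussion) that the product factors through $H/2H\times H/2H\to\Omega(H)$; since $\Size{x_{1}}=\Size{x_{2}}>\Size{x_{j}}$ for $j\ge 3$, any $\beta\in\Aut(H)$ induces on the $\{x_{1},x_{2}\}$-block modulo~$2$ an element of $\GL_{2}(\E)$, and a direct two-line check over $\E$ shows that $x_{i}x_{j}=t_{i}+t_{j}$ (with $t_{i}+t_{i}=0$) is $\GL_{2}(\E)$-equivariant. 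This replaces the case-by-case verification you flag as the ``main obstacle''.
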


Note that we have to take $\Size{x_{1}} > 4$ here, for the same
argument of Proposition~\ref{subsub:greaterthan4}. And $(H, \circ)$ is
isomorphic 
to $H$, as per
Remark~\ref{rem:is-iso}. 

\subsubsection{Torsion case, $m \ge 2$, $x_{1} x_{2} = 0$}

\ppar

Applying $\gamma_{12}$ to $x_{1} x_{2} = 0$ we obtain $x_{2}^{2} = 0$.

The  arguments of  the proof  of \Fact~\ref{rec:all-zero}  yield $x_{1}
x_{j} = x_{2} x_{i} = x_{i} x_{j} = 0$ for $i, j > 2$.

If $\Size{x_{1}} = \Size{x_{2}}$, then applying $\xi_{12}$ to
$x_{2}^{2} = 0$ we obtain $x_{1}^{2} = 0$, and the ring has trivial
multiplication. 

Therefore $\Size{x_{1}} > \Size{x_{2}}$, and the ring is described
in~\eqref{eq:5-two-or-four}. 

\subsection{The case $F \ne 0$}\ppar

Write
\begin{equation*}
  F = \prod_{i=1}^{n} \Span{z_{i}},
\end{equation*}
with all $z_{i} \ne 0$. 

Consider the following
automorphisms of $F \times H$, which are 
trivial on $H$.
\begin{enumerate}
\item $\Xi_{i j}$, for $i \ne j$ exchanges $z_{i}$ with $z_{j}$, and
  leaves $H$ and all the other $z_{k}$ fixed.
\item $\Gamma_{i j}$, for $i \ne j$, maps $z_{i}$ to $z_{i} + z_{j}$,
  and leaves $H$ and all the other $z_{k}$ fixed.
\item $\zeta_{i g}$, for $g \in H$, maps $z_{i}$ to $z_{i} + g$,
  and leaves $H$ and  all the other $z_{k}$ fixed.
\end{enumerate}

Recall that $\Omega(H) = \Set{ t \in G : 2 t = 0 }$. By
Lemma~\ref{lemma:threefold}, the ring product on $G$ yields a bilinear map
\begin{equation*}
  G / 2 G \times G / 2 G \to \Omega(H).
\end{equation*}
Since the  automorphisms $\Xi_{i j},  \Gamma_{i j}, \zeta_{i  g}, \xi_{i
  j}, \gamma_{i j}, \beta_{i j}$ generate all the automorphisms of $F
/ 2F \times H / 2 H$, it will be easy  to see that all rings $(G, +, \cdot)$
constructed  in  the   following  have  the  property   that  all  the
automorphisms  of the  group $(G,  +)$ are  also automorphisms  of the
ring.

Since by Lemma~\ref{lemma:threefold}
the square map $z \to z^{2}$ is a group
homomorphism $F \to \Omega(H)$, we may make the following
\begin{ass}\label{ass:ass}
  The  indexing of the $z_{i}$ is chosen so that if some square of
  the $z_{i}$ is non-zero, then $z_{1}^{2} \ne 0$.
\end{ass}

We first record the following well-know fact, which in our context can
be seen using the $\beta_{i1}$.
\begin{lemma}\label{lemma:min-char}
  Let $P \ne 1$ be a finite, abelian $p$-group. The following are
  equivalent:
  \begin{enumerate}
  \item \emph{$P$ has a characteristic minimal subgroup}, that is, a
    characteristic subgroup of order $p$,
  \item $P$ has a \emph{unique} characteristic minimal  subgroup, and
  \item $P$ is  the direct product of a cyclic group  of order $p^{e}$, for
    some $e \ge 1$, by a group of exponent less than $p^{e}$.
  \end{enumerate}
  If these conditions are verified, the unique characteristic
  subgroup of order $p$ is $P^{p^{e-1}}$.
\end{lemma}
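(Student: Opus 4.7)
Fix a decomposition $P = \prod_{i=1}^{m} \Span{x_i}$ with $\Size{x_i} = p^{e_i}$ and $e_1 \ge e_2 \ge \dots \ge e_m \ge 1$, set $e = e_1$, and let $t_i = p^{e_i - 1} x_i$. Condition~(3) translates to ``$m = 1$ or $e_1 > e_2$'', and in that case $p^{e-1} x_i = 0$ for every $i > 1$, so $P^{p^{e-1}} = \Span{t_1}$. The subgroup $S = \Set{ t \in P : p t = 0 }$ equals $\Span{t_1, \dots, t_m}$, is elementary abelian of rank $m$, and contains every subgroup of $P$ of order $p$; hence characteristic subgroups of $P$ of order $p$ correspond exactly to $\Aut(P)$-invariant lines in $S$. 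My strategy is to prove (3)~$\Rightarrow$~(2) together with the explicit identification $P^{p^{e-1}}$, note that (2)~$\Rightarrow$~(1) is trivial, and finish with the contrapositive of (1)~$\Rightarrow$~(3).

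For (3)~$\Rightarrow$~(2), $P^{p^{e-1}} = \Span{t_1}$ is a power subgroup and therefore characteristic, with order $p$, so at least one characteristic subgroup of order $p$ exists. To prove uniqueness, let $W = \Span{w}$ be any such subgroup with $w = \sum_{i=1}^{m} a_i t_i$, $a_i \in \FF_p$. For each $i > 1$ one has $e_1 > e_i$, so the $p$-group analogue of $\beta_{i 1}$ from the paper --- the map sending $x_i \mapsto x_i + p^{e_1 - e_i} x_1$ and fixing every other generator --- is a genuine automorphism of $P$, and a direct check shows it sends $t_i \mapsto t_i + t_1$ while fixing the remaining $t_j$. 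Invariance of $W$ under this automorphism yields $w + a_i t_1 = c w$ for some $c \in \FF_p$; if $a_i \ne 0$ then $w$ is a nonzero scalar multiple of $t_1$, which forces $a_j = 0$ for $j \ne 1$ and in particular $a_i = 0$, a contradiction. Hence $a_i = 0$ for every $i > 1$, so $W = \Span{t_1} = P^{p^{e-1}}$.

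For (1)~$\Rightarrow$~(3), I argue by contrapositive: assume $e_1 = e_2$ and show no line of $S$ is $\Aut(P)$-invariant. Writing $w = \sum a_i t_i$ again, the automorphism $x_1 \mapsto x_1 + x_2$ (with the other generators fixed) is well defined because $e_2 \le e_1$, and since $e_1 = e_2$ it sends $t_1 \mapsto t_1 + t_2$ and fixes every other $t_j$. The identical linear-algebra identity now reads $w + a_1 t_2 = c w$, forcing $a_1 = 0$; symmetrically $x_2 \mapsto x_2 + x_1$ gives $a_2 = 0$; and for each $i > 2$ one still has $e_1 > e_i$, so the $\beta_{i 1}$ automorphism from the previous paragraph applies and kills $a_i$. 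This forces $w = 0$, contradicting $\Size{W} = p$. The main place where care is required --- rather than any deep obstacle --- is verifying at every invocation that the stated map really is an automorphism of $P$ and that its action on $t_1, \dots, t_m$ is exactly as claimed; once that routine bookkeeping is done, the body of the argument is a single linear-algebra move repeated in several guises.
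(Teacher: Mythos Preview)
Your proof is correct and follows exactly the approach the paper hints at (the paper records the lemma as ``well known'' and only remarks that ``in our context [it] can be seen using the $\beta_{i1}$'', without giving any further argument). One small slip: in the contrapositive argument for (1)~$\Rightarrow$~(3), the assertion ``for each $i > 2$ one still has $e_1 > e_i$'' is false in general---take $e_1 = e_2 = e_3$. Fortunately your argument only needs $e_1 \ge e_i$, which holds automatically by the chosen ordering, so $\beta_{i1}$ (with $p^{e_1 - e_i} = 1$ in the equal-exponent case) is still a well-defined automorphism sending $t_i \mapsto t_i + t_1$, and the rest goes through unchanged.
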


Write
\begin{equation*}
  F^{2} = \Set{ a b : a, b \in F} \subseteq \Omega(H)
\end{equation*}
for the set of products of elements of $F$.

\begin{rec}\label{rec:char-inv}\label{rec:squares}\ppar
  \begin{enumerate}
  \item\label{item:char}  $\Aut(H)$ acts trivially on the set $F^{2}$.
  \item\label{item:ris1} The set $F^{2} \subseteq
    \Omega(H)$ is either zero or
    $\Span{t_{1}}$. If it is non-zero, then it is the unique minimal
    characteristic subgroup of $H$.
  \item\label{item:squares} The set $\Set{ z^{2} : z \in F } \subseteq
    \Omega(H)$ is either zero or
    $\Span{t_{1}}$. If it is non-zero, then it is the unique minimal
    characteristic subgroup of $H$, and $n = 1$.
  \end{enumerate}
\end{rec}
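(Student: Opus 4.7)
The plan is to treat the three assertions in sequence, since each feeds into the next. For part~(\ref{item:char}), I would extend any $\beta \in \Aut(H)$ to $\tilde\beta \in \Aut(G)$ by letting it act trivially on $F$. Theorem~\ref{thm:normal-regular}.(\ref{item:auto-auto}) guarantees that $\tilde\beta$ is a ring automorphism, so for $a, b \in F$ one has $\tilde\beta(a b) = \tilde\beta(a)\tilde\beta(b) = a b$; since $a b \in H$, this reads $\beta(a b) = a b$, which is the assertion.

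For part~(\ref{item:ris1}), the key observation is that the pointwise stabilizer of $\Omega(H)$ in $\Aut(H)$ corresponds exactly to the characteristic subgroups of $H$ of order dividing $2$: a nonzero $v \in \Omega(H)$ is fixed by $\Aut(H)$ iff $\Span{v}$ is characteristic of order $2$. By Lemma~\ref{lemma:min-char} such a subgroup exists iff it is unique, in which case it equals $H^{2^{e_1-1}} = \Span{t_1}$. Hence the $\Aut(H)$-fixed set of $\Omega(H)$ is either $\Set{0}$ or $\Span{t_1}$. Combining this with part~(\ref{item:char}) gives $F^{2} \subseteq \Set{0, t_1}$, and since $0 \in F^{2}$, the set $F^{2}$ is either $\Set{0}$ or $\Span{t_1}$; in the latter case $\Span{t_1}$ is by construction the unique minimal characteristic subgroup of $H$.

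For part~(\ref{item:squares}), Lemma~\ref{lemma:threefold}.(\ref{item:Frobenius}) tells me that $z \mapsto z^{2}$ is an additive homomorphism $F \to \Omega(H)$, so its image is a subgroup of $F^{2}$; by part~(\ref{item:ris1}) this image is therefore $\Set{0}$ or $\Span{t_1}$. Suppose it is nonzero; then Assumption~\ref{ass:ass} and the containment in $\Span{t_1}$ force $z_{1}^{2} = t_{1}$. The delicate point is to conclude $n = 1$, and I would do this by exploiting two automorphisms of $G$ that fix $H$ pointwise. Applying $\Xi_{12}$ to the equation $z_{1}^{2} = t_{1}$ yields $z_{2}^{2} = t_{1}$, while applying $\Gamma_{12}$ yields $(z_{1} + z_{2})^{2} = z_{1}^{2} + z_{2}^{2} = t_{1}$, and hence $z_{2}^{2} = 0$. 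Since $t_{1} \ne 0$, these two equations are incompatible, so $n = 1$. The main obstacle is precisely this last step: recognising that a permutation and a transvection of the free part, both constrained to fix the product $z_{1}^{2} \in H$, together pin $F$ down to a single cyclic summand.
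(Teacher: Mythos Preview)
Your proposal is correct and follows essentially the same approach as the paper: for~(\ref{item:char}) you both extend an automorphism of $H$ trivially to $F$; for~(\ref{item:ris1}) you both observe that any nonzero element of $F^{2}$ spans a characteristic minimal subgroup and invoke Lemma~\ref{lemma:min-char}; and for~(\ref{item:squares}) you both apply $\Xi_{12}$ and $\Gamma_{12}$ to derive a contradiction from $n > 1$. The only cosmetic difference is that the paper combines the two conclusions as $t_{1} = z_{1}^{2} + z_{2}^{2} = 2 t_{1} = 0$, whereas you phrase it as the incompatibility of $z_{2}^{2} = t_{1}$ and $z_{2}^{2} = 0$.
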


Note that when $F^{2} \ne 0$ in~\ref{item:ris1} and $\Set{ z^{2} : z
  \in F} \ne 0$ in~\ref{item:squares}, then by
Lemma~\ref{lemma:min-char} either $m = 1$ or $\Size{x_{1}} >
\Size{x_{2}}$.

\begin{proof}
  To see~\eqref{item:char}, take an arbitrary automorphism of $H$, and
  extend it trivially to $F$.

  Let $u$ be an arbitrary non-zero element of $F^{2}$. Then $\Span{u}$
  is a characteristic minimal subgroup of $H$, so that by
  Lemma~\ref{lemma:min-char} it is the unique characteristic minimal
  subgroup of $H$. This shows~\eqref{item:ris1}. 

  A similar argument
  yields the first part of~\eqref{item:squares}. If $\Set{ z^{2} : z
    \in F } \ne \Set{ 0 }$, then by Assumption~\ref{ass:ass} we have
  $z_{1}^{2} \ne 0$, and thus $z_{1}^{2} = t_{1}$. If $n > 1$,
  applying $\Xi_{12}$ we see that $z_{2}^{2} = t_{1}$, but then
  applying $\Gamma_{12}$ to $z_{1}^{2} = t_{1}$ we obtain
  \begin{equation*}
    t_{1} = z_{1}^{2} = (z_{1} + z_{2})^{2} = z_{1}^{2} + z_{2}^{2} =
    2 t_{1} = 0,
  \end{equation*}
  a contradiction.
\end{proof}

\begin{rec}\label{rec:Fne0}
  If $F \ne 0$, then $H^{2} = 0$.
\end{rec}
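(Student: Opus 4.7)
The plan is to exploit the automorphisms $\zeta_{1, g}$, $g \in H$, which are available precisely because $F \ne 0$. The crucial feature of each $\zeta_{1,g}$ is that it fixes $H$ pointwise, and so fixes $\Omega(H)$ pointwise, while translating $z_{1}$ by $g$.

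Fix arbitrary $g, h \in H$. By Lemma~\ref{lemma:threefold}.\eqref{item:products-are-involutions}, the product $g z_{1}$ lies in $\Omega(H)$, and is therefore fixed by $\zeta_{1,h}$. On the other hand, since $\zeta_{1,h}$ is a ring automorphism by Theorem~\ref{thm:normal-regular}.\eqref{item:auto-auto}, applying it to $g z_{1}$ yields
\begin{equation*}
  g z_{1}
  =
  (g z_{1})^{\zeta_{1,h}}
  =
  g^{\zeta_{1,h}} \cdot z_{1}^{\zeta_{1,h}}
  =
  g (z_{1} + h)
  =
  g z_{1} + g h,
\end{equation*}
which forces $g h = 0$. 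Since $g, h \in H$ were arbitrary, we conclude $H^{2} = 0$.

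I do not foresee any real obstacle: the argument is essentially a one-line computation, and the existence of the automorphisms $\zeta_{1,g}$ is exactly what distinguishes the case $F \ne 0$ from the purely torsion case treated in the previous subsection, where such translations of a free generator by a torsion element are unavailable.
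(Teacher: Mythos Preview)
Your proof is correct and is essentially the same as the paper's: both apply the automorphism $\zeta_{1,h}$ (the paper writes $\zeta_{1 x_{j}}$, using a generator) to the product $z_{1} g$ (respectively $z_{1} x_{i}$), note that this product lies in $H$ and is therefore fixed, and read off $g h = 0$. The only cosmetic difference is that the paper works with generators $x_{i}, x_{j}$ while you work with arbitrary $g, h \in H$.
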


\begin{proof}
  Consider arbitrary $i, j \le m$. Since $z_{1} x_{i} \in
  H$, it is fixed by $\zeta_{1x_{j}}$, so that
  \begin{equation*}
    z_{1} x_{i} 
    = 
    (z_{1} x_{i}) \zeta_{1x_{j}} 
    = 
    (z_{1} + x_{j}) x_{i}
    =
    z_{1} x_{i} + x_{j} x_{i}, 
  \end{equation*}
  and thus $x_{i} x_{j} = 0$.
\end{proof}

\begin{rec}\label{rec:ngt2fsq0}
  If $n > 2$, then $F^{2} = 0$.
\end{rec}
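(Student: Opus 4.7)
The plan is to assume $F^{2} \ne 0$ and derive a contradiction using the automorphism $\Gamma_{c a}$, whose applicability requires a third free generator $z_{c}$ distinct from $z_{a}$ and $z_{b}$; this is precisely where the hypothesis $n > 2$ enters.

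First I would appeal to Fact~\ref{rec:char-inv}\eqref{item:ris1} to get that $F^{2} = \Span{t_{1}}$ under our standing assumption. Second, since $n > 2 \ge 2$, Fact~\ref{rec:squares}\eqref{item:squares} forces $\Set{z^{2} : z \in F} = 0$, so in particular $z_{i}^{2} = 0$ for every $i$. This lets me write every product of generators as $z_{i} z_{j} = \lambda_{i j} t_{1}$ with $\lambda_{i j} \in \E$, symmetric in $i, j$ and with $\lambda_{i i} = 0$.

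The heart of the argument: given any $a \ne b$ in $\Set{1, \dots, n}$, choose $c \notin \Set{a, b}$ (possible because $n > 2$). The automorphism $\Gamma_{c a}$ fixes $H$ (hence fixes $t_{1}$) and $z_{b}$, while sending $z_{c}$ to $z_{c} + z_{a}$. Applying it to the identity $z_{c} z_{b} = \lambda_{c b} t_{1}$ yields
\begin{equation*}
\lambda_{c b} t_{1} = (z_{c} + z_{a}) z_{b} = \lambda_{c b} t_{1} + \lambda_{a b} t_{1},
\end{equation*}
forcing $\lambda_{a b} = 0$. Since this holds for every pair $a \ne b$, all products of generators vanish, whence $F^{2} = 0$, contradicting the assumption.

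There is no substantial obstacle here. The only real observation is that $\Gamma_{c a}$ (rather than some $\Xi$ or $\Gamma_{a b}$) cleanly isolates the term $z_{a} z_{b}$ one wants to kill, and that isolation requires a third index which $n > 2$ supplies; the use of the remaining automorphisms of $F \times H$ plays no role because the target $F^{2} \subseteq \Span{t_{1}}$ is already pinned down.
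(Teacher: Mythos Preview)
Your argument is correct and uses the same core idea as the paper: pick a third free generator and apply a $\Gamma$-automorphism to a product of two generators so as to isolate and kill the cross term. The paper's proof is slightly leaner: it argues directly (no contradiction setup), and it does not invoke Fact~\ref{rec:char-inv} or the squares statement at all, since one only needs that $z_{i} z_{k} \in H$ is fixed by $\Gamma_{k j}$; your preliminary appeal to $F^{2} = \Span{t_{1}}$ is therefore unnecessary scaffolding, though not wrong.
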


\begin{proof}
  Let $i, j, k$ be distinct indices. Applying $\Gamma_{k j}$ to $z_{i}
  z_{k}$, we get
  \begin{equation*}
    z_{i} z_{k} = z_{i} (z_{k} + z_{j}) = z_{i} z_{k} + z_{i} z_{j},
  \end{equation*}
  whence $z_{i} z_{j} = 0$ for all $i, j$.
\end{proof}

If  $z_{1}  x_{1}  =  0$, then  applying  the
$\Xi_{1i}$ and the $\gamma_{1j}$ we see that $z_{i} x_{j} = 0$ for all
$i, j$, that is, $F H = 0$. 

Let us first consider the case when $F H \ne 0$, so that
\begin{equation*}
  z_{1} x_{1}
  =
  \sum_{k=1}^{n} \epsilon_{k} t_{k}
  \ne 0.
\end{equation*}
Applying $\beta_{i1}$ to  this, for $i > 1$, we  obtain $z_{1} x_{1} =
\sum_{k=1}^{n}  \epsilon_{k}  t_{k}  + \epsilon_{i}  t_{1}$,  so  that
$\epsilon_{i} =  0$, and thus  $z_{1} x_{1}  = t_{1}$. Note  that this
implies $0 = z_{1}^{2} x_{1} = z_{1} t_{1}$, so that $\Size{x_{1}} \ge
4$. If $n  > 1$, applying $\Xi_{12}$  to $z_{1} x_{1} =  t_{1}$ we get
$z_{2}  x_{1} =  t_{1}$, and  applying $\Gamma_{12}$  we get  $t_{1} =
(t_{1}) \Gamma_{12} = (z_{1} x_{1})  \Gamma_{12} = (z_{1} + z_{2}) x_{1}
= t_{1}  + t_{1}  = 0$, a  contradiction. Therefore $n  = 1$.  We have
obtained
\begin{rec}
  If $F H \ne 0$, then $n = 1$ and $z_{1} x_{1} = t_{1}$.
\end{rec}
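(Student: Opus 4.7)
The plan is to split the argument in two: first pin down $z_{1} x_{1}$ as $t_{1}$, and then rule out $n > 1$ by a transport/linearization trick using the automorphisms $\Xi_{12}$ and $\Gamma_{12}$.

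For the first step, I would write $z_{1} x_{1} = \sum_{k=1}^{m} \epsilon_{k} t_{k}$ with $\epsilon_{k} \in \E$, which is legitimate since Lemma~\ref{lemma:threefold} forces all products to lie in $\Omega(H)$. The automorphism $\beta_{i 1}$ (for $i > 1$) fixes $F$ pointwise and sends $t_{i}$ to $t_{i} + t_{1}$ while fixing the other $t_{k}$; since it must fix the product $z_{1} x_{1}$, comparing coefficients immediately yields $\epsilon_{i} = 0$ for every $i > 1$. So $z_{1} x_{1} \in \Set{0, t_{1}}$. To exclude $0$ here, I would invoke the contrapositive of the observation just preceding the statement: using the $\Xi_{1 i}$ (interchanging $z_{1}$ and $z_{i}$) and the $\gamma_{1 j}$ (sending $x_{1}$ to $x_{1} + x_{j}$), if $z_{1} x_{1} = 0$ then every $z_{i} x_{j}$ vanishes, contradicting $F H \ne 0$. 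Hence $z_{1} x_{1} = t_{1}$.

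For the second step, I would assume for contradiction that $n > 1$. Applying the automorphism $\Xi_{12}$ to the identity $z_{1} x_{1} = t_{1}$ gives $z_{2} x_{1} = t_{1}$, because $\Xi_{12}$ swaps $z_{1}$ and $z_{2}$ and fixes $H$. Applying $\Gamma_{12}$ (which fixes $H$, fixes $z_{2}$, and sends $z_{1}$ to $z_{1} + z_{2}$) and using bilinearity, I obtain
\begin{equation*}
  t_{1} = (z_{1} x_{1}) \Gamma_{12} = (z_{1} + z_{2}) x_{1} = z_{1} x_{1} + z_{2} x_{1} = t_{1} + t_{1} = 0,
\end{equation*}
contradicting $t_{1} \ne 0$. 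Therefore $n = 1$, and the proof is complete.

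There is no real obstacle: the argument is a straightforward application of the recipe used throughout this section, namely exploiting that the product is $\Aut(G)$-invariant (by Theorem~\ref{thm:normal-regular}.\eqref{item:auto-auto}) together with bilinearity over $\E$ from Lemma~\ref{lemma:threefold}. The only place one has to be mindful is verifying that the automorphisms $\beta_{i 1}, \Xi_{12}, \Gamma_{12}$ act trivially on the pieces they should, so that the transported identities produce genuine constraints rather than tautologies.
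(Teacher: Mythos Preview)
Your proof is correct and follows essentially the same approach as the paper: expand $z_{1}x_{1}$ in the $t_{k}$, use the $\beta_{i1}$ to kill all coefficients with $i>1$, invoke the observation preceding the lemma to exclude $z_{1}x_{1}=0$, and then use $\Xi_{12}$ followed by $\Gamma_{12}$ to derive the contradiction $t_{1}=0$ when $n>1$. The only cosmetic difference is that the paper first notes $z_{1}x_{1}\ne 0$ and then applies $\beta_{i1}$, whereas you reverse the order; this is immaterial.
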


Applying the
$\xi_{1j}$, we obtain that if
\begin{equation*}\label{eq:FH-not-zero-1}
  \Size{x_{1}} = \Size{x_{2}} = \dots = \Size{x_{k}} > \Size{x_{k+1}}
\end{equation*}
(where we might have $k = m$, so that the final inequality does not occur),
then
\begin{equation*}\label{eq:FH-not-zero-2}
  z_{1} x_{i} = t_{i}
  \text{, for $i \le k$},
  \qquad
  z_{1} x_{i} = 0 
  \text{, for $i > k$.} 
\end{equation*}
If $k > 1$, this implies $F^{2} = 0$, by \Fact~\ref{rec:char-inv} and
Lemma~\ref{lemma:min-char}.   
Also, if $n > 2$, then $F^{2} = F H = H^{2} = 0$.

We are now able to discuss the possibilities for the products on $F$. 

\subsubsection{$F \ne 0$, $F^{2} = 0$}\ppar

In this case we have
\begin{proposition}
  The following ring gives rise to a group $(G, \circ) \cong G$.
  \begin{equation}\label{eq:all-zero-but-FH}
    \begin{cases}
      n = 1, m \ge 1\\
      \Size{x_{1}} = \Size{x_{2}} = \dots = \Size{x_{k}} \ge 4
      &
      \text{for some $k \le m$}\\
      \Size{x_{k}} > \Size{x_{k+1}}, & \text{if $k < m$}\\
      z_{1}^{2} = 0\\
      z_{1} x_{i} = t_{i}, &
      \text{for $i \le k$}\\
      z_{1} x_{i} = 0, & 
      \text{for $i > k$}\\
      x_{i} x_{j} = 0, & \text{for all $i, j$}\\
    \end{cases}
  \end{equation}  
  The same group obviously allows also trivial ring multiplication.
\end{proposition}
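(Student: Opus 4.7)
The plan is to verify the three claims implicit in the proposition: that the prescribed multiplication extends to a commutative ring structure on $G$, that this ring satisfies conditions~\eqref{item:ghk}~and~\eqref{item:auto-auto} of Theorem~\ref{thm:normal-regular} (so that the associated $N$ lies in $\Kc(G)$), and that the resulting group $(G, \circ)$ is isomorphic to $G$.

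First I would extend $z_1^{2} = 0$, $z_1 x_i = t_i$ for $i \le k$, $z_1 x_i = 0$ for $i > k$, and $x_i x_j = 0$ bilinearly over $\Z$: every product lies in $\Omega(H)$, which has exponent $2$, and the integer coefficient of each $x_j$ in a general element of $G$ is well defined modulo $2^{e_j} \ge 2$, hence modulo $2$, so the extension is well defined and commutativity is immediate. Condition~\eqref{item:ghk} holds because every product lies in $H$, while $H \cdot H = 0$ by construction and $z_1 \cdot t_i = 2^{e_i - 1}(z_1 x_i) = 2^{e_i - 1} t_i = 0$, using $e_i \ge 2$ for $i \le k$ and $z_1 x_i = 0$ for $i > k$.

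Next I would verify that every $\beta \in \Aut(G, +)$ is a ring automorphism. By Lemma~\ref{lemma:threefold} the product factors through a bilinear map $G/2G \times G/2G \to \Omega(H)$, so it suffices to test invariance on the generating automorphisms $\zeta_{1 g}, \xi_{i j}, \gamma_{i j}, \beta_{i j}$ (the $\Xi_{i j}, \Gamma_{i j}$ are absent because $n = 1$). For $\zeta_{1 g}$ with $g \in H$ one has $(z_1 + g) x_i = z_1 x_i + g x_i = z_1 x_i$ since $g x_i \in H \cdot H = 0$, and each $t_i$ is fixed. For $\xi_{i j}$, the hypothesis $\Size{x_1} = \dots = \Size{x_k} > \Size{x_{k+1}}$ ensures that the top homogeneous component coincides with $\Set{x_1, \dots, x_k}$, so swapping $x_i, x_j$ within a common component swaps the corresponding $t_i, t_j$ consistently with $z_1 x_i = t_i$. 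For $\gamma_{i j}$ ($i < j$) and $\beta_{i j}$ ($i > j$), the only subtle case is $e_i = e_j$, which by the same gap hypothesis forces $i$ and $j$ to lie both in $\Set{1, \dots, k}$ or both outside it; in each subcase the identities $2^{e_i - 1} x_j = t_j$ (when $e_i = e_j$) or $0$ (when $e_i > e_j$) make the two sides of the automorphism equation agree.

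Finally I would exhibit an isomorphism $(G, \circ) \cong G$ explicitly. Define
\begin{equation*}
  \Phi \colon F \times H \to (G, \circ),
  \qquad
  (a z_1, h) \mapsto (a z_1) \circ h = a z_1 + h + a(z_1 h).
\end{equation*}
Using $z_1^2 = 0$, $H \cdot H = 0$, and $z_1(z_1 h) = z_1^2 h = 0$, a direct expansion yields
\begin{equation*}
  \bigl( (a_1 z_1) \circ h_1 \bigr) \circ \bigl( (a_2 z_1) \circ h_2 \bigr)
  =
  \bigl( (a_1 + a_2) z_1 \bigr) \circ (h_1 + h_2),
\end{equation*}
so $\Phi$ is a group homomorphism; it is injective because the $F$-component of $\Phi(a z_1, h)$ is $a z_1$, and surjective because, for any $a z_1 + b$ with $b \in H$, the map $h \mapsto h + a(z_1 h)$ on $H$ is an involution (from $z_1(z_1 h) = 0$ and $2(z_1 h) = 0$), hence bijective. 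The main obstacle is the bookkeeping in the automorphism check, where compatibility with $\gamma_{i j}$ and $\beta_{i j}$ involves case distinctions on the relative sizes $e_i, e_j$ and on whether $i, j$ fall within $\Set{1, \dots, k}$; the hypothesis $\Size{x_k} > \Size{x_{k+1}}$ is precisely what makes all cases consistent.
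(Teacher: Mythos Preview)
Your argument is correct, and your explicit isomorphism $\Phi$ is precisely the map $\theta$ that the paper constructs in Section~\ref{sec:group}: since $z_{1}^{2}=0$ and $x_{i}x_{j}=0$, the paper's $\theta$ satisfies $(a z_{1})^{\theta}=a z_{1}$ and $h^{\theta}=h$ for $h\in H$, so $(a z_{1}+h)^{\theta}=(a z_{1})\circ h=\Phi(a z_{1},h)$. The organisational difference is that the paper attaches no separate proof here; it reaches~\eqref{eq:all-zero-but-FH} as the outcome of the necessary-condition analysis for $F\ne 0$ (Lemmas~\ref{rec:char-inv}--\ref{rec:ngt2fsq0} and the discussion of $FH\ne 0$), invokes the blanket remark after the list of $\Xi_{ij},\Gamma_{ij},\zeta_{ig},\xi_{ij},\gamma_{ij},\beta_{ij}$ that all rings produced in this section satisfy the automorphism condition, and defers $(G,\circ)\cong G$ to Remark~\ref{rem:is-iso} and Section~\ref{sec:group}. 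You instead verify sufficiency directly for this single case, which is a perfectly good alternative.

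One point deserves care. You call $\zeta_{1g},\xi_{ij},\gamma_{ij},\beta_{ij}$ ``generating automorphisms'', but they do not generate $\Aut(G)$: already for $m=1$ they omit the unit scalars on $\Span{x_{1}}$, and the sign change $z_{1}\mapsto -z_{1}$ is also absent. The paper only claims that these maps generate all automorphisms of $F/2F\times H/2H$. What makes your reduction legitimate is the extra (easy) fact that any $\beta\in\Aut(G)$ acting trivially on $G/2G$ automatically fixes $\Omega(H)$ pointwise (since $t_{i}=2^{e_{i}-1}x_{i}$ and $x_{i}^{\beta}\equiv x_{i}\bmod 2H$ forces $t_{i}^{\beta}=t_{i}$), and hence respects the multiplication. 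You rely on this implicitly; stating it would tighten the argument.
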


\subsubsection{$F^{2} \ne 0$, $z_{1}^{2} \ne 0$}\ppar
\label{sec:to-forward}

By \Fact~\ref{rec:squares}\eqref{item:squares}, we have $n = 1$ here, and
\begin{equation*}
  z_{1}^{2} = t_{1},
\end{equation*}
with $\Span{t_{1}}$ characteristic in $H$. We have obtained the following.
\begin{proposition}
  The  following  rings   give  rise  to  groups   $(G,  \circ)  \cong
  G$.
  \begin{equation}\label{eq:tf-nis1}
    \begin{cases}
      n = 1, m \ge 1\\
      \text{$m = 1$, or $m > 1$ and $\Size{x_{1}} > \Size{x_{2}}$}\\
      \Size{x_{1}} \ge 4 & \text{if $z_{1} x_{1} \ne 0$}\\
      z_{1}^{2} \in \Set{0, t_{1}}\\
      z_{1} x_{1} \in \Set{0, t_{1}}\\
      x_{i} x_{j} = 0 & \text{for all $i, j$}\\
    \end{cases}
  \end{equation}
  These are two rings if $\Size{x_{1}} = 2$ (and then $z_{1} x_{1} = 0$,
  with $x_{1} = t_{1}$), four rings if $\Size{x_{1}} \ge 4$.
\end{proposition}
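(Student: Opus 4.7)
\emph{Proof plan.} The preceding analysis in this subsection already assembles the main ingredients: \Fact~\ref{rec:squares}\eqref{item:squares} forces $n = 1$ and $z_{1}^{2} \in \Set{0, t_{1}}$; Lemma~\ref{lemma:min-char} applied to the characteristic subgroup $\Span{t_{1}}$ gives $m = 1$ or $\Size{x_{1}} > \Size{x_{2}}$; \Fact~\ref{rec:Fne0} yields $H^{2} = 0$. The argument of \Fact~\ref{rec:x1x2}, applied to $z_{1} x_{1}$ using $\beta_{i1}$ for $i > 1$ (which fixes $z_{1}, x_{1}$ and replaces $t_{i}$ by $t_{i} + t_{1}$), shows $z_{1} x_{1} \in \Set{0, t_{1}}$; and applying $\gamma_{1i}$ to $z_{1} x_{1}$ for $i > 1$ gives $z_{1} x_{1} = z_{1}(x_{1} + x_{i}) = z_{1} x_{1} + z_{1} x_{i}$, hence $z_{1} x_{i} = 0$. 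So the ring is pinned down by the two parameters $z_{1}^{2}, z_{1} x_{1} \in \Set{0, t_{1}}$.

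It remains to verify (a) the vanishing $ghk = 0$ together with automorphism invariance, (b) the isomorphism $(G, \circ) \cong G$, and (c) the count. For (a), by bilinearity it suffices to check triple products of generators; all such products involving at least two $x_{j}$ vanish because $H^{2} = 0$, and $z_{1}^{2} x_{j} = t_{1} x_{j} = 2^{e_{1}-1} x_{1} x_{j} = 0$ for the same reason. The only delicate case is $z_{1} t_{1} = 2^{e_{1}-1} z_{1} x_{1}$, arising from both $z_{1}^{3}$ and $(z_{1} x_{1}) z_{1}$: if $z_{1} x_{1} = 0$ this vanishes trivially, otherwise $z_{1} t_{1} = 2^{2 e_{1} - 2} x_{1}$ vanishes iff $e_{1} \ge 2$, matching the stated hypothesis $\Size{x_{1}} \ge 4$. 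For automorphism invariance, since $n = 1$ only the generators $\zeta_{1g}, \xi_{ij}, \gamma_{ij}, \beta_{ij}$ occur: a typical check is $(z_{1} + g)^{2} = z_{1}^{2}$ and $(z_{1} + g) x_{1} = z_{1} x_{1}$ for $\zeta_{1g}$ (using $g^{2} = 0$, $2 z_{1} g = 0$, $g x_{1} = 0$), and the only tight case is $\beta_{i1}$ acting on $z_{1} x_{i} = 0$, producing $2^{e_{1} - e_{i}} z_{1} x_{1} = 0$ because $e_{1} > e_{i}$.

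For (b), the projection $\pi \colon G \to F$ descends to a group homomorphism $(G, \circ) \to (F, +)$ since $ab \in \Omega(H) \subseteq H = \ker \pi$ for all $a, b$, and $\circ$ restricts to $+$ on $H$ because $H^{2} = 0$; hence $(G, \circ)$ is an abelian extension of the free abelian group $F$ by $H$, necessarily split, giving $(G, \circ) \cong F \oplus H = G$. For (c), when $\Size{x_{1}} = 2$ one has $t_{1} = x_{1}$, so the constraint in (a) forces $z_{1} x_{1} = 0$, leaving two rings indexed by $z_{1}^{2} \in \Set{0, t_{1}}$; when $\Size{x_{1}} \ge 4$ the two parameters vary independently, giving four rings. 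The principal obstacle throughout is the bookkeeping for $\beta_{i1}$, but the strict inequality $\Size{x_{1}} > \Size{x_{i}}$ for $i > 1$ makes every such check automatic.
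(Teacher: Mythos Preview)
Your argument is correct and, for the classification part, tracks the paper closely: you invoke the same facts (\Fact~\ref{rec:squares}, \Fact~\ref{rec:Fne0}, Lemma~\ref{lemma:min-char}, and the $\beta_{i1}$/$\gamma_{1i}$ computations already done in the text preceding the subsubsections) to pin down the two parameters $z_{1}^{2}, z_{1} x_{1} \in \Set{0, t_{1}}$. You also supply explicit checks of $ghk = 0$ and automorphism invariance that the paper leaves implicit.

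Where you genuinely diverge is in establishing $(G, \circ) \cong G$. The paper argues, here and more fully in Section~\ref{sec:group}, by verifying directly that the generators $z_{1}, x_{j}$ retain their orders in $(G, \circ)$ and that $(G, \circ)$ remains the direct product of the cyclic subgroups they generate (Remark~\ref{rem:is-iso}). Your route is instead structural: since all products lie in $H$ and $H^{2} = 0$, the projection $\pi \colon G \to F$ is a homomorphism $(G, \circ) \to (F, +)$ with kernel $(H, +)$, and the resulting short exact sequence splits because $F$ is free abelian. This is cleaner and more general---it works uniformly for every case with $F \ne 0$ (since \Fact~\ref{rec:Fne0} always gives $H^{2} = 0$)---whereas the paper's approach has the advantage of producing the explicit isomorphism $\theta$ of order two needed later in Lemma~\ref{lemma:iso-order-two}.
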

\begin{remark}\label{rem:isom}
Note that this case
comprises~\eqref{eq:all-zero-but-FH} when $k = 1$
in~\eqref{eq:all-zero-but-FH}. 
\end{remark}

\subsubsection{$F^{2} \ne 0$, $z_{1}^{2} = 0$}\ppar

By \Fact~\ref{rec:ngt2fsq0}, we have $n \le 2$.

The case $n = 1$ does not occur, as it means $F^{2} = 0$ here.

If $n = 2$, we have
\begin{equation*}%%%\label{eq:free-rank2}
  z_{1} z_{2} = t_{1}
\end{equation*}
by \Fact~\ref{rec:char-inv}\eqref{item:ris1}. We have obtained the following.
\begin{proposition}
  The  following  two rings   give  rise  to  groups   $(G,  \circ)  \cong
  G$.
  \begin{equation}\label{eq:r0n2}
    \begin{cases}
      n = 2, m \ge 1\\
      \Size{x_{1}} > \Size{x_{2}} & \text{if $m > 1$}\\ 
      z_{1}^{2} = z_{2}^{2} = 0\\
      z_{1} z_{2} \in \Set{0, t_{1}}\\
      z_{i} x_{j} = 0, & \text{for all $i, j$}\\
      x_{i} x_{j} = 0, & \text{for all $i, j$}\\
    \end{cases}
  \end{equation}
\end{proposition}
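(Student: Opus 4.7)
The plan is to work through the constraints step by step, using the preceding facts to force the ring structure, and then to verify the resulting two rings actually satisfy all the required properties.

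First, since $F^{2} \ne 0$ we have by \Fact~\ref{rec:ngt2fsq0} that $n \le 2$, and the case $n = 1$ is immediately excluded because $F^{2} = \Span{z_{1}^{2}} = 0$ under the hypothesis $z_{1}^{2} = 0$. So $n = 2$. Applying $\Xi_{12}$ to $z_{1}^{2} = 0$ forces $z_{2}^{2} = 0$, and then the only remaining way for $F^{2}$ to be nonzero is $z_{1} z_{2} \ne 0$. By \Fact~\ref{rec:char-inv}\eqref{item:ris1}, $F^{2} = \Span{t_{1}}$ is the unique minimal characteristic subgroup of $H$, so $z_{1} z_{2} = t_{1}$; and by Lemma~\ref{lemma:min-char} applied to this characteristic subgroup we get $m = 1$ or $\Size{x_{1}} > \Size{x_{2}}$.

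Next I would pin down the mixed and torsion products. From the discussion immediately preceding the subsection, $FH \ne 0$ forces $n = 1$; since $n = 2$ here, we must have $FH = 0$, giving $z_{i} x_{j} = 0$ for all $i, j$. And since $F \ne 0$, \Fact~\ref{rec:Fne0} gives $H^{2} = 0$, i.e.\ $x_{i} x_{j} = 0$ for all $i, j$. This exhausts the products, so any ring meeting the hypotheses has $z_{1} z_{2} \in \Set{0, t_{1}}$ and every other product zero, yielding exactly the two rings displayed in~\eqref{eq:r0n2}.

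It remains to check that each of these two rings actually defines a valid structure as required by Theorem~\ref{thm:normal-regular}\eqref{item:enters-ring}. The trivial ring is automatic. For the nontrivial ring $z_{1} z_{2} = t_{1}$, I only need to verify invariance under the generating automorphisms $\Xi_{ij}, \Gamma_{ij}, \zeta_{ig}, \xi_{ij}, \gamma_{ij}, \beta_{ij}$ together with $\Aut(H)$ acting on $H$: the swap $\Xi_{12}$ preserves $z_{1} z_{2}$, $\Gamma_{12}$ gives $(z_{1} + z_{2}) z_{2} = t_{1} + z_{2}^{2} = t_{1}$, each $\zeta_{1h}$ gives $(z_{1} + h) z_{2} = t_{1} + h z_{2} = t_{1}$ because $FH = 0$, and the $\xi_{ij}, \gamma_{ij}, \beta_{ij}$ together with $\Aut(H)$ fix the characteristic element $t_{1}$. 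The condition $g h k = 0$ of Theorem~\ref{thm:normal-regular}\eqref{item:ghk} holds because every product already lies in $\Omega(H)$, and $\Omega(H) \cdot G \subseteq H \cdot G = 0$.

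Finally, to conclude $(G, \circ) \cong G$, observe that since every product lies in $H$, one has $g \circ g = 2g + g^{2} = 2g$ for every $g$ involved in the chosen generators (more generally, products of generators remain in $H$, so orders are preserved and the decomposition $G = \Span{z_{1}} \times \Span{z_{2}} \times \prod_{i} \Span{x_{i}}$ with respect to $\circ$ still gives the direct product of cyclic factors of the same orders), in the spirit of Remark~\ref{rem:is-iso}. The main obstacle — really the only subtle point — is the verification that the nontrivial ring is $\Aut(G)$-invariant, and this reduces by the generation statement preceding Assumption~\ref{ass:ass} to the short checks above.
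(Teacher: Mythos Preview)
Your proof is correct and follows the same approach as the paper: you invoke \Fact~\ref{rec:ngt2fsq0} to get $n \le 2$, rule out $n=1$, use \Fact~\ref{rec:char-inv}\eqref{item:ris1} for $z_1 z_2 = t_1$ and the structure of $H$, and pick up $FH = 0$ and $H^2 = 0$ from the earlier Lemmas exactly as the paper does (mostly implicitly). You add explicit verification of $\Aut(G)$-invariance and of $(G,\circ)\cong G$, which the paper defers to the blanket remark before Assumption~\ref{ass:ass} and to Remark~\ref{rem:is-iso}; this extra detail is fine and does not constitute a different route.
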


In all  of these cases, it  is easy to  see that $H$ is  isomorphic to
$(H, \circ)$, as per Remark~\ref{rem:is-iso}.

We can sum up the results of this section in the following theorems
which represents our main results.
\begin{theorem}\label{thm:main}
  Let $(G, +)$ be a finitely generated abelian group,
  \begin{equation*}
    G = F \times H,
  \end{equation*}
  where
  \begin{equation*}
    F = \prod_{i=1}^{n} \Span{z_{i}}
  \end{equation*}
  is torsion-free, of rank $n$,
  \begin{equation*}
    H = \prod_{i=1}^{m} \Span{x_{i}}
  \end{equation*}
  is a $2$-group, with $\Size{x_{1}} \ge \Size{x_{2}} \ge \dots \ge
  \Size{x_{m}} > 1$.

  The possible ring structures with non-trivial multiplication $(G, +,
  \cdot)$ on $(G, +)$, such that
  \begin{enumerate}
  \item $(G, +) \cong (G, \circ)$, and
  \item all automorphisms of $(G, +)$ are also automorphisms of $(G, +, \cdot)$
  \end{enumerate}
  are those listed under
  \begin{center}
    \eqref{eq:this-is-cyclic},
    \eqref{eq:5-two-or-four},
    \eqref{eq:most-complicated},
    \eqref{eq:all-zero-but-FH},
    \eqref{eq:tf-nis1},
    \eqref{eq:r0n2}.
  \end{center}
  The groups from the different cases
  are            pairwise            non-isomorphic,            except
  for~\eqref{eq:all-zero-but-FH}~and  \eqref{eq:tf-nis1}, as  noted in
  Remark~\ref{rem:isom}.

  In the cases~\eqref{eq:5-two-or-four}~and \eqref{eq:tf-nis1} we have
  two or four  rings (including the ring  with trivial multiplication)
  for the same group structure, in the other cases we have two.

  All of these $G$ can be enlarged to $G \times D$, where $D$ is an
  abelian group of odd order which lies in the annihilator of the ring.
\end{theorem}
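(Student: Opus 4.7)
The plan is to view Theorem \ref{thm:main} as the consolidation of the six individual classifications carried out in Section \ref{sec:class}, with Theorem \ref{thm:normal-regular} providing the translation from ring structures satisfying conditions (1)--(2) to elements of $\Kc(G)$, and hence to the objects we care about. First I would dispose of the odd-order part: if we write $G = F \times H \times K$ with $K$ of odd order, then for any $a \in K$ of order $d$ and any $b \in G$, Lemma \ref{lemma:threefold}\eqref{item:products-are-involutions} forces $ab = d(ab) = 0$, so $K$ necessarily lies in the annihilator of any admissible ring product. This both justifies the reduction to $G = F \times H$ and establishes the last sentence of the theorem about enlarging $G$ by an abelian group $D$ of odd order.

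With $G = F \times H$ fixed, I would organize the classification as a bifurcation on whether $F = 0$. In the case $F = 0$, the value of $x_1 x_2$ is constrained by \Fact~\ref{rec:x1x2} to the form $\eta_1 t_1 + \eta_2 t_2$ with $(\eta_1, \eta_2) \ne (0,1)$, and the three possibilities $x_1 x_2 \in \{0, t_1, t_1+t_2\}$, together with Proposition~\ref{subsub:greaterthan4} handling $m = 1$, yield exactly the rings \eqref{eq:this-is-cyclic}, \eqref{eq:5-two-or-four}, and \eqref{eq:most-complicated}. In the case $F \ne 0$, \Fact~\ref{rec:Fne0} gives $H^2 = 0$, and further splits on whether $FH = 0$, whether $z_1^2 = 0$, and on $n$ (using \Fact~\ref{rec:char-inv}, \Fact~\ref{rec:squares}, \Fact~\ref{rec:ngt2fsq0}) produce the remaining three families \eqref{eq:all-zero-but-FH}, \eqref{eq:tf-nis1}, \eqref{eq:r0n2}. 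The counts of rings per fixed group structure $(G, +)$ have been recorded inside each corresponding proposition and need only be transcribed; the assertion $(G, +) \cong (G, \circ)$ has been verified along the way via Remark~\ref{rem:is-iso}, and condition (2) is ensured by construction because the automorphisms $\xi_{ij}, \gamma_{ij}, \beta_{ij}, \Xi_{ij}, \Gamma_{ij}, \zeta_{ig}$ generate $\Aut(F/2F \times H/2H)$, through which the bilinear form $G/2G \times G/2G \to \Omega(H)$ factors.

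What remains is the pairwise non-isomorphism across the six cases. The plan is to compare the tuple of invariants $(n, m, \Size{x_1}, \dots, \Size{x_m})$ together with the strict inequality constraints imposed by each family: these carve out disjoint regions of the lattice of finitely generated abelian groups, with the single overlap flagged in Remark~\ref{rem:isom}, where the $k = 1$ instance of \eqref{eq:all-zero-but-FH} coincides with the $z_1^2 = 0$ instance of \eqref{eq:tf-nis1}. I expect this bookkeeping to be the main, though essentially mechanical, obstacle: one has to confirm that the inequalities written into each case genuinely separate the underlying group structures, and that the stated overlap is the only coincidence. Once this is done, Theorem~\ref{thm:normal-regular}\eqref{item:GcircisoN} guarantees that distinct rings produce distinct $N \in \Kc(G)$, completing the classification.
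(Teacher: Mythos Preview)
Your proposal is correct and matches the paper's approach: Theorem~\ref{thm:main} is stated in the paper as a summary of the results of Section~\ref{sec:class}, with no separate proof, and your outline tracks exactly the case analysis carried out there (odd-part reduction via Lemma~\ref{lemma:threefold}; the $F=0$ branch via Proposition~\ref{subsub:greaterthan4}, \Fact~\ref{rec:x1x2}, and \Fact~\ref{rec:all-zero}; the $F\neq 0$ branch via \Fact s~\ref{rec:char-inv}, \ref{rec:Fne0}, \ref{rec:ngt2fsq0}; and the isomorphism $(G,+)\cong(G,\circ)$ via Remark~\ref{rem:is-iso}). Your observation that condition~(2) reduces to checking the listed automorphisms because the product factors through $G/2G\times G/2G\to\Omega(H)$ is precisely the justification the paper gives at the start of Subsection~4.2.
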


\begin{theorem}\label{thm:submain}
  In the notation of Theorem~\ref{thm:main},  
  \begin{enumerate}
  \item 
  the groups $G$ such that
  $\Size{\Hc(G)} = \Size{T(G)} = 4$ are the following.
  \begin{align*}
  &\begin{cases}
    n = 0, m \ge 2\\
   \Size{x_{1}} > 4\\
   \Size{x_{1}} > \Size{x_{2}}\\
   \Size{x_{2}} > \Size{x_{3}} & \text{if $m > 2$}\\
  \end{cases}
  \\
  &\begin{cases}
    n = 1, m \ge 1\\
    \Size{x_{1}} \ge 4\\
    \text{$m = 1$, or $m > 1$ and $\Size{x_{1}} > \Size{x_{2}}$}\\
  \end{cases}
  \end{align*}
  \item 
  the groups $G$ such that
  $\Size{\Hc(G)} = \Size{T(G)} = 2$ are the following.
  \begin{align*}
  &\begin{cases}
    n = 0, m = 1\\
    \Size{x_{1}} > 4\\
  \end{cases}
  \\&
  \begin{cases}
    n = 0, m = 2\\
    \Size{x_{1}} = 4\\
    \Size{x_{2}} = 2\\
  \end{cases}
\\&
  \begin{cases}
    n = 0, m > 2\\
    \Size{x_{1}} > 4\\
    \Size{x_{1}} > \Size{x_{2}}\\
    \Size{x_{2}} = \Size{x_{3}}\\
  \end{cases}
\\&
  \begin{cases}
    n = 0, m \ge 2\\
    \Size{x_{1}} = \Size{x_{2}} > 4\\
    \Size{x_{2}} > \Size{x_{3}} & \text{if $m > 2$}\\
  \end{cases}
\\&
  \begin{cases}
    n = 1, m \ge 1\\
    \Size{x_{1}} = \Size{x_{2}} = \dots = \Size{x_{k}} \ge 4,
    &
    \text{for some $k \le m$}\\
    \Size{x_{k}} > \Size{x_{k+1}} & \text{if $k < m$}\\
  \end{cases}
\\&
  \begin{cases}
    n = 1, m = 1\\
    \Size{x_{1}} = 2\\
  \end{cases}
\\&
  \begin{cases}
    n = 2, m \ge 1\\
    \Size{x_{1}} > \Size{x_{2}} & \text{if $m > 1$}\\
  \end{cases}
  \end{align*}

\item 
    for all other groups $G$ we have
  $\Size{\Hc(G)} = \Size{T(G)} = 1$.
  \end{enumerate}
\end{theorem}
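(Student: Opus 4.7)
The plan is an accounting exercise once Theorem~\ref{thm:main} is in hand. Since $T(G)$ acts regularly on $\Hc(G)$ by the theorem at the end of Section~\ref{sec:same-holomorph}, we have $\Size{T(G)} = \Size{\Hc(G)}$, so the task reduces to computing $\Size{\Hc(G)}$. By Theorem~\ref{thm:normal-regular}, each $N \in \Kc(G)$ corresponds bijectively to a ring $(G, +, \cdot)$ of the specified form, and $\Hc(G) \subseteq \Kc(G)$ consists of those whose associated group $(G, \circ)$ is isomorphic to $(G, +)$. For such $N$ the inclusion $\Hol(G) \le N_{S(G)}(N)$ is automatically an equality by an order count, since $\Size{\Aut(N)} = \Size{\Aut(G)}$ once $N \cong G$. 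The trivial ring always yields $N = \rho(G) \in \Hc(G)$, so $\Size{\Hc(G)} \ge 1$ unconditionally.

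The count then proceeds by traversing the six families~\eqref{eq:this-is-cyclic}, \eqref{eq:5-two-or-four}, \eqref{eq:most-complicated}, \eqref{eq:all-zero-but-FH}, \eqref{eq:tf-nis1}, \eqref{eq:r0n2} of Theorem~\ref{thm:main}. That theorem records, for each family, the isomorphism type of the underlying group $(G, +)$ and the number of rings (including the trivial one) sharing that group structure: namely $4$ in family~\eqref{eq:5-two-or-four} when $\Size{x_{1}} > 4$ together with the appropriate strict inequalities, and in family~\eqref{eq:tf-nis1} when $\Size{x_{1}} \ge 4$, and $2$ in the other sub-cases. Any finitely generated abelian group not matching any of these six families admits only the trivial ring, so $\Size{\Hc(G)} = 1$, yielding Theorem~\ref{thm:submain}(3).

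The only delicate bookkeeping is the overlap flagged in Remark~\ref{rem:isom}: the underlying group of~\eqref{eq:all-zero-but-FH} at $k = 1$ coincides with that of~\eqref{eq:tf-nis1}, so the rings from the two families must be pooled rather than double-counted. Taking this into account, and using the pairwise non-isomorphism of the other families asserted in Theorem~\ref{thm:main}, I would match each sub-case against the conditions on $n$, $m$, and the $\Size{x_{i}}$ tabulated in Theorem~\ref{thm:submain}(1)--(2), obtaining $\Size{\Hc(G)} = 4$ for the two conditions in part~(1) and $\Size{\Hc(G)} = 2$ for the seven conditions in part~(2). The main obstacle is precisely this matching: verifying that the constraints on orders of the $x_{i}$ and $z_{i}$ built into each sub-case of Theorem~\ref{thm:main} translate exactly into the row-by-row list of Theorem~\ref{thm:submain}, with the overlap at $k=1$ accounted for.
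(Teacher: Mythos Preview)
Your reduction to counting rings via Theorem~\ref{thm:main} is the same bookkeeping the paper carries out at the end of Section~\ref{sec:class}, and that part is fine. The genuine gap is in your sentence ``the inclusion $\Hol(G) \le N_{S(G)}(N)$ is automatically an equality by an order count, since $\Size{\Aut(N)} = \Size{\Aut(G)}$ once $N \cong G$.'' This order argument is only valid when $G$ is finite. The theorem is about finitely generated abelian groups, and several of the families in the statement have $n \ge 1$, i.e.\ a nontrivial free part $F$. For such $G$ the groups $\Hol(G)$ and $N_{S(G)}(N)$ are both infinite; knowing that one sits inside the other and that they are abstractly isomorphic does not force equality, since an infinite group can be properly embedded in a copy of itself.

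The paper closes this gap in Section~\ref{sec:group} by a different mechanism: Lemma~\ref{lemma:iso-order-two} exhibits, for each $N \in \Kc(G)$ with $(G,\circ) \cong G$, an isomorphism $\theta : G \to (G,\circ)$ that is an \emph{involution} in $S(G)$. By Lemma~\ref{lemma:conjugation} any such $\theta$ conjugates $\rho(G)$ to $N$, hence conjugates $\Hol(G)$ to $N_{S(G)}(N)$; since $\theta^{2} = 1$, conjugating twice gives $\Hol(G) \ge N_{S(G)}(N)$ as well, forcing equality without any cardinality comparison. So to complete your argument you need either this involution construction or some substitute that works uniformly for infinite $G$.
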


\section{The group $T(G)$}\
\label{sec:group}

We first record the following
\begin{lemma}\label{lemma:conjugation}
  In the notation of Section~\ref{sec:regular}, suppose $\theta \in
  S(G)$ is an isomorphism  $\theta : G
  \to (G, \circ)$.
  
  Then $\theta$ conjugates $\rho(G)$ to $N$.
\end{lemma}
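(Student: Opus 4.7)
The plan is to prove the pointwise identity $\rho(g)^\theta = \nu(g^\theta)$ for every $g \in G$, where $\nu : (G, \circ) \to N$ is the isomorphism constructed in Theorem~\ref{thm:normal-regular}(ii), and the exponent denotes conjugation $\theta^{-1} \rho(g) \theta$ in $S(G)$. Once this is established, the conclusion $\theta^{-1} \rho(G) \theta = N$ is immediate, since $g \mapsto \nu(g^\theta)$ is a bijection $G \to N$ (a composition of bijections), and its image is $\nu(G) = N$.

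Before computing, I would record two simple observations. First, $0$ is the identity of $(G, \circ)$ as well: indeed $0 \circ x = 0 + x + 0 \cdot x = x$. Hence the group isomorphism $\theta$, which must send identity to identity, satisfies $0^\theta = 0$, and consequently $0^{\theta^{-1}} = 0$. Second, the fact that $\theta$ is a homomorphism reads
\begin{equation*}
  (a + b)^\theta = a^\theta \circ b^\theta \qquad \text{for all } a, b \in G.
\end{equation*}
From the explicit formula in Theorem~\ref{thm:normal-regular}(i), together with $\nu(h) = \gamma(h)\rho(h)$, we also get $x^{\nu(h)} = x^{\gamma(h)} + h = x \circ h$.

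The main computation is then a one-line unwinding: for arbitrary $x, g \in G$,
\begin{equation*}
  x^{\theta^{-1} \rho(g) \theta}
  = \bigl(x^{\theta^{-1}} + g\bigr)^\theta
  = \bigl(x^{\theta^{-1}}\bigr)^\theta \circ g^\theta
  = x \circ g^\theta
  = x^{\nu(g^\theta)},
\end{equation*}
so $\theta^{-1} \rho(g) \theta = \nu(g^\theta) \in N$, as required. I do not anticipate a real obstacle; the only care needed is to keep the two operations $+$ and $\circ$ and the conjugation convention $\alpha^\theta = \theta^{-1} \alpha \theta$ straight. Conceptually, the lemma simply expresses that under the identification $\theta : (G, +) \to (G, \circ)$, the right regular representation of $(G, +)$ on itself corresponds, via $\nu$, to the subgroup $N$ inside $S(G)$.
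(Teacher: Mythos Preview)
Your proof is correct and essentially identical to the paper's own argument: both verify the pointwise identity $\rho(g)^{\theta} = \nu(g^{\theta})$ by the same one-line computation $(x^{\theta^{-1}} + g)^{\theta} = x \circ g^{\theta} = x^{\nu(g^{\theta})}$. Your additional remarks (that $0^{\theta} = 0$, that $x^{\nu(h)} = x \circ h$, and that the map $g \mapsto \nu(g^{\theta})$ is a bijection onto $N$) are correct and helpful but are left implicit in the paper.
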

\begin{proof}
  For $g, h \in G$ we have
  \begin{equation*}
    g^{\rho(h)^{\theta}}
    =
    g^{\theta^{-1} \rho(h) \theta}
    =
    (g^{\theta^{-1}} + h)^{\theta}
    =
    g \circ h^{\theta}
    =
    g^{\nu(h^{\theta})},
  \end{equation*}
  whence $\rho(h)^{\theta} = \nu(h^{\theta})$.
\end{proof}

In  the previous  section we  have  determined, for  a given  finitely
generated abelian group $G$, all regular subgroups $N$ of $S(G)$ which
are normal in $\Hol(G)$, that is, the elements of the set
\begin{equation*}
  \Kc(G)
  =
  \Set{ N \le S(G) : \text{$N$ is regular, $N \norm \Hol(G)$} }.
\end{equation*}
We weeded
out those $N \in \Kc(G)$ for which $N_{S(G)}(N) > N_{S(G)}(\rho(G))$,
  and seen that the remaining groups are isomorphic to $G$. Now if $N
\in \Kc(G)$ and $\theta : G \to (G, \circ) \cong N$ is an isomorphism, by
Lemma~\ref{lemma:conjugation} we have
\begin{equation}\label{eq:ineq}
  N_{S(G)}(\rho(G))^{\theta}
  =
  N_{S(G)}(\rho(G)^{\theta})
  =
  N_{S(G)}(N)
  \ge
  N_{S(G)}(\rho(G)).
\end{equation}
In this section we will prove the following Lemma.
\begin{lemma}\label{lemma:iso-order-two}
  For  each of  the  regular subgroups  $N  \cong G$  of the  previous
  section, there is $\theta \in  S(G)$ \emph{of order two} which is an
  isomorphism $\theta : G \to (G, \circ)$.
\end{lemma}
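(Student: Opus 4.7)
The plan is to look for $\theta$ of the form $\theta(g) = g + \phi(g)$ where $\phi : G \to G^{2}$ takes values in the ideal of ring products. Because $G^{3} = 0$ by Theorem~\ref{thm:normal-regular}\eqref{item:ghk}, the ideal $G^{2}$ annihilates $G$, so the cross-terms $g\phi(h)$, $\phi(g)h$ and $\phi(g)\phi(h)$ all vanish and
\begin{equation*}
  \theta(g) \circ \theta(h) = g + h + gh + \phi(g) + \phi(h).
\end{equation*}
Hence $\theta$ is an isomorphism $(G, +) \to (G, \circ)$ iff $\phi$ satisfies the $1$-cocycle identity $\phi(g + h) = \phi(g) + \phi(h) + g h$. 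Using $2\phi(g) = 0$ and $g \phi(g) = 0$, a short computation gives $\theta^{2}(g) = g + \phi(\phi(g))$, so $\theta^{2} = \mathrm{id}$ as soon as $\phi$ vanishes on $G^{2}$.

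Iterating the cocycle identity and setting $\phi(z_i) = \phi(x_i) = 0$ on the standard generators of $G = F \times H$ yields the explicit polynomial formula
\begin{equation*}
  \phi\Bigl(\sum_i j_i z_i + \sum_i k_i x_i\Bigr)
  = \sum_i \binom{j_i}{2} z_i^{2} + \sum_i \binom{k_i}{2} x_i^{2} + \sum_{i<j} j_i j_j z_i z_j + \sum_{i,j} j_i k_j z_i x_j + \sum_{i<j} k_i k_j x_i x_j.
\end{equation*}
The only well-definedness check is $\phi(2^{e_i} x_i) = \binom{2^{e_i}}{2} x_i^{2} = 0$, which is automatic by parity when $e_i \ge 2$, and which holds for $e_i = 1$ because none of the six ring families of Theorem~\ref{thm:main} produces a nonzero $x_i^{2}$ when $\Size{x_i} = 2$ (by Proposition~\ref{subsub:greaterthan4} and direct inspection of the families).

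To finish, I verify $\phi|_{G^{2}} = 0$: since $G^{2}$ is additively generated by the elements $t_k = 2^{e_k - 1} x_k$ that actually appear in ring products, it suffices to check $\phi(t_k) = \binom{2^{e_k - 1}}{2} x_k^{2} = 0$. Parity kills this for $e_k \ge 3$; in the borderline case $e_k = 2$ the ring structures force $x_k^{2} = 0$, because Proposition~\ref{subsub:greaterthan4} forbids $\Size{x_1} = 4$ with $x_1^{2} = t_1$, and families~\eqref{eq:all-zero-but-FH}, \eqref{eq:tf-nis1}, \eqref{eq:r0n2} have $x_i x_j = 0$ throughout. The main obstacle is this final exhaustive check across the six families of Theorem~\ref{thm:main}, but the cocycle reduction above turns each family into a short parity verification, and the lemma follows.
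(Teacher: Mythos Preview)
Your proof is correct and follows essentially the same route as the paper. Your correction term $\phi$ is exactly the paper's function $f(u)=u^{\theta}-u$; both of you derive the cocycle identity $\phi(g+h)=\phi(g)+\phi(h)+gh$, deduce $\theta^{2}(g)=g+\phi(\phi(g))$, and then verify $\phi$ vanishes on $G^{2}$ by a short case analysis on the order of the relevant $x_{k}$, using that $x_{k}^{2}=0$ whenever $\Size{x_{k}}\le 4$ in each of the six families. Your explicit binomial formula for $\phi$ is a minor packaging difference; the paper instead records $f(2u)=u^{2}$ and $f(4u)=0$ and argues from $G^{2}\le 4H$ when $\Size{x_{1}}>4$, but the content is identical.
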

We will then have
from~\eqref{eq:ineq} 
\begin{equation*}
  N_{S(G)}(\rho(G))
  =
  N_{S(G)}(\rho(G))^{\theta^{2}}
  \ge
  N_{S(G)}(\rho(G))^{\theta},
\end{equation*}
so that
\begin{equation*}
  N_{S(G)}(N)
  =
  N_{S(G)}(\rho(G)).
\end{equation*}
Therefore the regular subgroups $N \cong G$  of the previous section
will turn out to be exactly the elements of the set
\begin{equation*}
  \Hc(G)
  =
  \Set{ N \le S(G) : \text{$N$ is regular, $N \cong G$ and $N_{S(G)}(N)
    = \Hol(G)$} }.
\end{equation*}

In the previous section we have shown that for each group structure
$(G, +)$ there are $1, 2$, or $4$ rings $(G, +, \cdot)$. Therefore we
will have obtained
\begin{theorem}\label{thm:TG}
  For each finitely generated abelian group $G$, the group $T(G)$ is
  elementary abelian, of order $1, 2$, or $4$
\end{theorem}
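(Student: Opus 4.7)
The plan is to derive Theorem~\ref{thm:TG} from the machinery already set up, with Lemma~\ref{lemma:iso-order-two} doing the real work. First, since $T(G) = N_{S(G)}(\Hol(G))/\Hol(G)$ acts regularly on $\Hc(G)$ by conjugation, one has $\Size{T(G)} = \Size{\Hc(G)}$. The classification in Section~\ref{sec:class} (summarized in Theorems~\ref{thm:main} and~\ref{thm:submain}) shows $\Size{\Hc(G)} \in \Set{1, 2, 4}$, giving the cardinality statement at once.

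For the structural statement, the argument would run as follows. Fix $N \in \Hc(G)$ and let $\theta \in S(G)$ be an isomorphism $G \to (G, \circ)$; by Lemma~\ref{lemma:conjugation}, $\rho(G)^{\theta} = N$, and hence conjugation by $\theta$ sends $\Hol(G) = N_{S(G)}(\rho(G))$ to $N_{S(G)}(N) = \Hol(G)$, placing $\theta \in N_{S(G)}(\Hol(G))$. By Lemma~\ref{lemma:iso-order-two}, $\theta$ can be chosen with $\theta^{2} = 1$, so the coset $\theta \Hol(G) \in T(G)$ squares to the identity. Since the action of $T(G)$ on $\Hc(G)$ is regular, as $N$ ranges over $\Hc(G)$ these cosets exhaust $T(G)$. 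Thus every element of $T(G)$ has order dividing $2$. A group in which every element squares to the identity is automatically abelian (from $x^{2} = y^{2} = (xy)^{2} = e$ one gets $xy = yx$), so $T(G)$ is elementary abelian, and combined with the cardinality bound this is exactly the statement of the theorem.

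The main obstacle is Lemma~\ref{lemma:iso-order-two} itself, which must be verified explicitly for each of the six ring families \eqref{eq:this-is-cyclic}, \eqref{eq:5-two-or-four}, \eqref{eq:most-complicated}, \eqref{eq:all-zero-but-FH}, \eqref{eq:tf-nis1}, \eqref{eq:r0n2}. The natural ansatz is to take $\theta$ of the form $g \mapsto g + \phi(g)$ with $\phi$ taking values in $\Omega(H)$ (indeed in the annihilator of the ring), so that $\theta^{2}(g) = g + 2 \phi(g) = g$ is automatic from Lemma~\ref{lemma:threefold}, while the requirement that $\theta$ intertwine $+$ with $\circ$ becomes the cocycle-type identity
\begin{equation*}
  \phi(g + h) = \phi(g) + \phi(h) + g h.
\end{equation*}
Since in each family the product is a symmetric $\E$-bilinear form on $G / 2 G$ with values in $\Omega(H)$, one expects to solve this by hand using the explicit tables of products recorded in Section~\ref{sec:class}; the bookkeeping of making $\theta$ simultaneously a bijection and an involution in the few ``large'' families (particularly~\eqref{eq:most-complicated} and~\eqref{eq:all-zero-but-FH}) is where the verification will be most delicate.
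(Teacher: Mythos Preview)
Your overall strategy matches the paper's: reduce everything to Lemma~\ref{lemma:iso-order-two}, and use the involutive $\theta$'s both to force $N_{S(G)}(N) = \Hol(G)$ (so that the surviving rings from Section~\ref{sec:class} are exactly $\Hc(G)$) and to show that every coset in $T(G)$ has order at most $2$. Your explicit argument for the elementary-abelian structure is in fact more detailed than what the paper writes out.

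There is, however, a genuine error in your sketch of Lemma~\ref{lemma:iso-order-two}. You assert that with $\theta(g) = g + \phi(g)$ and $\phi$ valued in $\Omega(H)$, ``$\theta^{2}(g) = g + 2 \phi(g) = g$ is automatic from Lemma~\ref{lemma:threefold}.'' That computation is wrong: one has
\begin{equation*}
  \theta^{2}(g) = \bigl(g + \phi(g)\bigr) + \phi\bigl(g + \phi(g)\bigr),
\end{equation*}
and your own cocycle identity, together with $\phi(g)$ lying in the annihilator, gives $\phi(g + \phi(g)) = \phi(g) + \phi(\phi(g))$, so that
\begin{equation*}
  \theta^{2}(g) = g + 2 \phi(g) + \phi(\phi(g)) = g + \phi(\phi(g)).
\end{equation*}
Thus the involution property is equivalent not to $2\phi = 0$ but to $\phi(\phi(g)) = 0$ for all $g$, i.e.\ $\phi$ must vanish on $G^{2}$. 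This is \emph{not} automatic: $\phi$ is not a homomorphism, and its behaviour on $G^{2}$ depends on the family. The paper's proof of Lemma~\ref{lemma:iso-order-two} is devoted precisely to this point. It takes $\theta$ to be the isomorphism fixing each generator (so the cocycle identity is satisfied for free), sets $f(u)=u^{\theta}-u$, derives $u^{\theta^{2}} = u + f(f(u))$, and then carries out a case analysis: when $\Size{x_{1}} > 4$ one has $G^{2} \le 4 H$ and uses the identity $f(4u) = 0$; when $\Size{x_{1}} = 4$ one checks $f(t_{i}) = 0$ directly from $x_{i}^{2} = 0$; and $\Size{x_{1}} = 2$ is immediate. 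So the ``delicate bookkeeping'' you anticipate is not in solving the cocycle equation---that part is trivial once you fix the generators---but in establishing $\phi\circ\phi = 0$.
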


\begin{proof}[Proof of Lemma~\ref{lemma:iso-order-two}]
We now describe, for each of  the regular subgroups $N \cong G$ of the
previous section,  an element of $\theta \in S(G)$  of order two
which yields an 
isomorphism $\theta : G \to (G, \circ)$.

In the previous section we have
noted that in all  the cases of Theorem~\ref{thm:main}, the generators
$z_{i}, x_{j}$ are still generators of $(G, \circ)$, they retain their
orders in $(G, \circ)$, and $(G,  \circ)$ is still a direct product of
the   cyclic  subgroups   generated   by  the   $z_{i},  x_{j}$   (see
Remark~\ref{rem:is-iso}).  Therefore there is an isomorphism $\theta :
G  \mapsto  (G,  \circ)$   such  that  $z_{i}^{\theta}  =  z_{i}$  and
$x_{j}^{\theta} =  x_{j}$ for  all $j$.  This  can be extended  to the
whole of $G$ via
\begin{equation}\label{eq:fromGtoGcirc}
  (x + y)^{\theta} 
  =
  x^{\theta} \circ y^{\theta}
  =
  x^{\theta} + y^{\theta} + x^{\theta} y^{\theta}
\end{equation}
for all $x, y  \in G$.   

Define a
function
\begin{align*}
f :\ &G \to G\\
     &u \mapsto u^{\theta} - u.
\end{align*}
We will be using several times the following simple observation
\begin{equation}\label{lemma:f}
  f(G) \subseteq G^{2}.
\end{equation}

Recall from Lemma~\ref{lemma:threefold} that $2 G^{2} = 0$, and from
Theorem~\ref{thm:normal-regular}\eqref{item:ghk} that
$G^{2}$ lies in the
annihilator of the ring.

  To prove~\eqref{lemma:f},  we proceed by induction on  the length of
  $u$  as   a  sum   of  the  generators   $z_{i},  x_{j}$.   We  have
  from~\eqref{eq:fromGtoGcirc}, if $y$ is one of these generators,
  \begin{align*}
    f(u + y)
    &=
    (u + y)^{\theta}
    -
    (u + y)
    \\&=
    u^{\theta} - u
    +
    y^{\theta} - y
    +
    u^{\theta} y^{\theta}
    \\&=
    f(u) + u^{\theta} y^{\theta}
    \in G^{2},
  \end{align*}
  as $y^{\theta} = y$, for $y$ a generator.

We have thus proved~\eqref{lemma:f}.

Note that for all $u, v \in G$ we have
\begin{align*}
  (u + v)^{\theta}
  &=
  u^{\theta} + v^{\theta} + u^{\theta} v^{\theta}
  \\&=
  u + f(u) + v + f(v) + (u + f(u)) (v + f(v))
  \\&=
  u + v + f(u) + f(v) + u v,
\end{align*}
so that 
\begin{equation*}
  f(u + v) =  f(u) + f(v) + u v.
\end{equation*}
Therefore~\eqref{lemma:f}
yields $f(2 u) = 2 f(u) + u^{2} = u^{2}$, so that
\begin{equation}\label{eq:4}
  f(4 u) 
  =
  f(2 u + 2 u)
  =
  2 f(2 u) + 4 u^{2}
  =
  0.
\end{equation}
Therefore
\begin{equation}\label{eq:isaninvolution}
  \begin{aligned}
    u^{\theta^{2}}
    &=
    (u + f(u))^{\theta}
    \\&=
    u + f(u) + f(u + f(u))
    \\&=
    u + f(u) + f(u) + f(f(u)) + u f(u)
    \\&=
    u + f(f(u)),
  \end{aligned}
\end{equation}
by~\eqref{lemma:f}.

In  the cases of  Theorem~\ref{thm:main} when  $\Size{x_{1}} >  4$, we
have $f(G) \subseteq G^{2} \le 4 H$. Now~\eqref{eq:isaninvolution}~and
\eqref{eq:4} yield $u^{\theta^{2}} = u$.

In the cases when $\Size{x_{1}} = 4$, we have $G^{2} = \Span{t_{1}, \dots,
  t_{k}}$ for some $k$, and $x_{i}^{2} = 0$ for all $i$. Thus we have,
for $i \le k$, 
\begin{equation}\label{eq:vanishes}
  f(t_{i}) = f(2 x_{i}) = 2 f(x_{i}) + x_{i}^{2} = 0,
\end{equation}
so
that~\eqref{eq:isaninvolution}  implies 
$u^{\theta^{2}} = u$.

Finally, when $\Size{x_{1}} = 2$ in \eqref{eq:tf-nis1}, we have $f(t_{1}) =
f(x_{1}) = x_{1}^{\theta} - x_{1} = 0$.

Therefore $\theta \in S(G)$ is in all cases an involution, as claimed.
\end{proof}

\section*{Acknowledgements}

We  wish  to  extend  our  heartfelt gratitude  to  the  referee  for
providing a  number of very  insightful comments,  that have led  to a
distinct improvement of our manuscript.

%\section*{References}

%\bibliography{Refs}

\begin{thebibliography}{CDVS06}

\bibitem[CDVS06]{affine}
A.~Caranti, F.~Dalla~Volta, and M.~Sala, \emph{Abelian regular subgroups of the
  affine group and radical rings}, Publ. Math. Debrecen \textbf{69} (2006),
  no.~3, 297--308. \MR{2273982 (2007j:20001)}

\bibitem[FCC12]{FCC}
S.~C. Featherstonhaugh, A.~Caranti, and L.~N. Childs, \emph{Abelian {H}opf
  {G}alois structures on prime-power {G}alois field extensions}, Trans. Amer.
  Math. Soc. \textbf{364} (2012), no.~7, 3675--3684. \MR{2901229}

\bibitem[Koh15]{Kohl-multi}
Timothy Kohl, \emph{Multiple holomorphs of dihedral and quaternionic groups},
  Comm. Algebra \textbf{43} (2015), no.~10, 4290--4304. \MR{3366576}

\bibitem[Mil08]{Miller-multi}
G.~A. Miller, \emph{On the multiple holomorphs of a group}, Math. Ann.
  \textbf{66} (1908), no.~1, 133--142. \MR{1511494}

\bibitem[Mil51]{Mills-multi}
W.~H. Mills, \emph{Multiple holomorphs of finitely generated abelian groups},
  Trans. Amer. Math. Soc. \textbf{71} (1951), 379--392. \MR{0045117 (13,530a)}

\bibitem[Mil53]{Mills-non}
\bysame, \emph{On the non-isomorphism of certain holomorphs}, Trans. Amer.
  Math. Soc. \textbf{74} (1953), 428--443. \MR{0054599}

\end{thebibliography}
\providecommand{\bysame}{\leavevmode\hbox to3em{\hrulefill}\thinspace}
\providecommand{\MR}{\relax\ifhmode\unskip\space\fi MR }
% \MRhref is called by the amsart/book/proc definition of \MR.
\providecommand{\MRhref}[2]{%
  \href{http://www.ams.org/mathscinet-getitem?mr=#1}{#2}
}
\providecommand{\href}[2]{#2}

\end{document}